\theoremstyle{definition}
\newtheorem*{thm*}{Theorem}
\newtheorem{prop}{Proposition}[section]
\newtheorem{lemma}[prop]{Lemma}
\newtheorem{thm}[prop]{Theorem}
\newtheorem{definition}[prop]{Definition}
\newtheorem{corollary}[prop]{Corollary}
\newtheoremstyle{pourlesremarques}{\topsep}{\topsep}{\normalfont}{}{\bfseries}{.}{ }{}
\theoremstyle{pourlesremarques}
\newtheorem{rem}[prop]{Remark}
\newtheorem*{rem*}{Remark}
\newtheoremstyle{pourlesexemples}{\topsep}{\topsep}{\normalfont}{}{\bfseries}{.}{ }{}
\theoremstyle{pourlesexemples}
\def\presuper#1#2%
\newcommand{\Wh}{\mathcal{W}}
\def\Rep{\operatorname{Rep}}
\newcommand{\Ind}{\operatorname{Ind}}
\newcommand{\ind}{\operatorname{ind}}
\newcommand{\Bil}{\operatorname{Bil}}
\newcommand{\Hom}{\operatorname{Hom}}
\renewcommand{\subset}{\subseteq}
\newcommand{\diag}{\operatorname{diag}}
\newcommand{\GL}{\operatorname{GL}}
\newcommand{\Sp}{\operatorname{Spec}}
\newcommand{\Res}{\operatorname{Res}}
\newcommand{\w}{\varpi}
\renewcommand{\d}{\delta}
\newcommand{\C}{\mathbb{C}}
\newcommand{\Q}{\mathbb{Q}}
\newcommand{\M}{\mathcal{M}}
\newcommand{\Z}{\mathbb{Z}}
\newcommand{\1}{\mathbf{1}}
\def\End{\operatorname{End}}
\def\val{\operatorname{val}}
\def\GL{\operatorname{GL}}
\def\\Hom{\operatorname{\Hom}}
\def\dim{\operatorname{dim}}
\def\Ql{\overline{\mathbb{Q}_{\ell}}}
\def\Rep{\operatorname{Rep}}
\def\Zl{\overline{\mathbb{Z}_{\ell}}}
\def\Fl{\overline{\mathbb{F}_{\ell}}}
\def\diag{\operatorname{diag}}
\def\leq{\leqslant}
\def\geq{\geqslant}
\def\Res{\operatorname{Res}}
\def\Rep{\operatorname{Mod}}
\def\presuper#1#2%
\DeclareRobustCommand{\rvdots}{%
  \vbox{
    \baselineskip4\p@\lineskiplimit\z@
    \kern-\p@
    \hbox{.}\hbox{.}\hbox{.}  \hbox{.}
  }}
\subjclass{11F70 (primary); 11F85, 11S40 (secondary)}
\keywords{Whittaker model, Kirillov model, gamma factor, families, integral structure}
\title{The Kirillov model in families}
\author{N. Matringe}
\thanks{Nadir Matringe, Universit\'e de Poitiers, Laboratoire de Math\'ematiques et Applications, T\'el\'eport 2 - BP 30179, Boulevard Marie et Pierre Curie, 86962, Futuroscope Chasseneuil Cedex. France. \\
Email:~Nadir.Matringe@math.univ-poitiers.fr}
\author{G. Moss}
\thanks{Gilbert Moss, University of Utah, Department of Mathematics, 155 South 1400 East JWB233, Salt Lake City, Utah 84103. USA. \\ Email:~moss@math.utah.edu}
\date{\today}
\begin{document}
\maketitle

\begin{abstract}
\noindent Let $F$ be a non archimedean local field, let $k$ be an algebraically closed field of characteristic $\ell$ different from the residual characteristic 
of $F$, and let $A$ be a commutative Noetherian $W(k)$-algebra, where $W(k)$ denotes the Witt vectors. By extending recent results of the second author regarding the Rankin-Selberg functional equations, we show that if $V$ is an $A[\GL_n(F)]$-module of Whittaker type, then the mirabolic restriction map on its Whittaker {space} is injective. This gives a new quick proof of the existence of Kirillov models for representations of Whittaker type, including complex representations, which generalizes to the $\ell$-modular and families setting, in contrast with the previous proofs. In the special case where $A=k=\Fl$ and $V$ is irreducible generic, our result in particular answers a question of Vignéras from \cite{VigGL2}.
\end{abstract}

\section{Introduction}

Let $F$ be a nonarchimedean local field with residue characteristic $p$. Kirillov, in \cite{kirillov} for irreducible unitary complex representations acting on Hilbert spaces, and then Jacquet and Langlands in \cite{JL} for smooth irreducible complex representations, produced models for representations of $GL_2(F)$ in the space $\Ind_{N_2}^{P_2}\psi$ of locally constant functions $f$ on the mirabolic subgroup $P_2:=\left\{\left(\begin{smallmatrix}g & x\\ 0 & 1 \end{smallmatrix} \right)\right\}$ satisfying $f(nh)=\psi(n)f(h)$, $n\in N_2$, $h\in P_2$, where $N_2:=\left\{\left(\begin{smallmatrix}1 & x\\ 0 & 1 \end{smallmatrix} \right)\right\}$, and $\psi$ is a nondegenerate character of $N_2$. Gelfand and Kazhdan \cite{GK} showed that all cuspidal irreducible representations of $G_n:=GL_n(F)$ admit a Kirillov model, and conjectured that all \emph{generic} irreducibles do, where generic means having non-vanishing twisted coinvariants $\pi_{N_n,\psi}$ (when $n=2$ an irreducible representation is generic if and only if it is infinite-dimensional, but not in general). Bernstein and Zelevinsky proved this conjecture in \cite{BZ77} using their theory of ``derivatives,'' which express the representation theory of $P_n$ in terms of $G_m$ for $m< n$. Another proof was then given by Bernstein using methods of harmonic analysis in \cite{Ber}, and then Jacquet and Shalika gave a proof which applies to a more general class of representations which are nowadays called standard modules, in \cite{JSpacific}. The Kirillov model property has proved useful to study local constants, for instance it is used to establish properties of local new vectors and conductors of generic representations of $G_n$ in \cite{JSconductor,conductor_corrected,matringe_essential} (see \cite{CasselmanConductor} for $GL_2(F)$).

For the group $G_3$, a proof was given by Jacquet, Piatetski-Shapiro, and Shalika in \cite[Prop 7.5.1]{JPS1}, using their functional equation of the Rankin-Selberg local gamma factor. This proof was never generalized though it was observed in \cite{JPS1} that the same proof would work for any 
$n\geq 3$ given then $(n,n-2)$ functional equation, not available at that time. Our goal is to expand and in fact simplify the method of \cite[Prop 7.5.1]{JPS1}, using the $(n,n-1)$ functional equation, in order to establish the existence of Kirillov models for ``essentially''  generic representations and, more generally, for generic $\ell$-adic families. Essentially generic representations form a broader class than irreducible generic ones: we define them, following  \cite{EH14}, to be the finite-length representations with a single irreducible generic subquotient, which is positioned as a submodule. Up to isomorphism, they are the Whittaker spaces $\Wh(\pi,\psi)$ associated to representations $\pi$ of $\emph{Whittaker type}$ in the terminology of Jacquet, Piatetski-Shapiro, and Shalika (\cite{JPS2}), i.e., finite-length $\pi$ satisfying $\dim_{\mathbb{C}}(\pi_{U_n,\psi})=1$. {The existence of Kirillov models for essentially generic representations over $\mathbb{C}$ (which generalizes \cite{JSpacific}) first appeared in \cite[4.3]{EH14}, and can also be found in \cite{KMextension}. Using the idea of \cite[Prop 7.5.1]{JPS1}, we give a new proof, which is quicker than all previous proofs if the Rankin--Selberg functional equation is taken for granted. Moreover, our method works in the broader setting of $\ell$-adic families, where the coefficient field $\mathbb{C}$ is replaced by an $\ell$-adic ring, for a prime $\ell\neq p$.}

Some of the deepest number-theoretic properties of automorphic forms are expressed in terms of their congruences modulo various prime numbers $\ell$. In the local setting, this motivates the study of congruences among admissible representations of $GL_n(F)$, or more broadly, the deformation of representations in $\ell$-adic integral families. When $\ell\neq p$, it was observed in \cite{EH14}, guided by the local Langlands correspondence, that irreducible generic objects are not well-suited to deformation in families, and should be replaced by essentially generic ones. The existence of Kirillov models for essentially generic objects means the injectivity of the restriction-to-$P_n$ map,
\begin{equation}\label{kirillovinjectivity}
W\mapsto W|_{P_n}: \Ind_{N_n}^{G_n}\psi\to \Ind_{N_n}^{P_n}\psi,
\end{equation}
on the Whittaker spaces $\Wh(\pi,\psi)$ of representations $\pi$ of Whittaker type.

Let $k$ be an algebraically closed field of characteristic $\ell\neq p$, let $W(k)$ denote the ring of Witt vectors, and let $A$ be a commutative $W(k)$-algebra. A smooth $A[G_n]$-module is a ``family'' of representations over the base scheme $\Sp(A)$ in the sense that its fiber at each point is a smooth representation over the residue field at that point. Let $\psi:N_n\to W(k)^{\times}$ be a nondegenerate character and let $\psi_A: N_n\to A^{\times}$ be the character induced by $\psi$. Our first main result is that this injectivity of (\ref{kirillovinjectivity}) holds after replacing $\mathbb{C}$ with a $W(k)$-algebra $A$, and $\psi$ with $\psi_A$.
\begin{thm}\label{kirillovintro}
Let $A$ be a Noetherian $W(k)$-algebra, and let $\Wh$ be an $A[G_n]$-submodule of $\Ind_{N_n}^{G_n}\psi_A$ that is admissible, finitely generated over $A[G_n]$, such that $\Wh_{N_n,\psi}$ is free of rank one over $A$. Then the map $W\mapsto W|_{P_n}$ is injective on $\Wh$.
\end{thm}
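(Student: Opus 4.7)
The plan is to extend the argument of Jacquet--Piatetski-Shapiro--Shalika in \cite[Prop.~7.5.1]{JPS1}, originally for $\GL_3$ over $\mathbb{C}$, to $\GL_n$ and to the $W(k)$-algebra $A$. The key input is the Rankin--Selberg functional equation for Whittaker type $A[\GL_n(F)]$-modules, which is available from (an extension of) the second author's prior work.

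Fix $W \in \Wh$ with $W|_{P_n}=0$. For each $1 \leq m \leq n-1$ and each auxiliary Whittaker type $A[\GL_m(F)]$-module $\Wh'$ with $W' \in \Wh'$, the Rankin--Selberg integral $\Psi(W,W',X) \in A((X))$ (where $X = q^{-s}$ is a formal variable) involves $W$ evaluated only at matrices of the form
$$\begin{pmatrix} g & 0 & 0 \\ x & I_{n-m-1} & 0 \\ 0 & 0 & 1 \end{pmatrix} \in P_n, \qquad g \in \GL_m(F),\ x \in \Mat_{(n-m-1)\times m}(F),$$
so the hypothesis forces $\Psi(W,W',X) \equiv 0$. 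The $A$-coefficient functional equation
$$\tilde\Psi(\tilde W,\tilde W',X') \;=\; \gamma(X,\pi,\pi',\psi)\,\Psi(W,W',X) \;=\; 0,$$
where $\tilde W(g) = W(w_n\,{}^tg^{-1})$ and $X'$ is the appropriate substitution in $X$, then makes the ``dual'' integral vanish identically as a formal Laurent series in $X$.

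The integrand of $\tilde\Psi$ evaluates $\tilde W$ on matrices of the same shape in $P_n$; through the involution $g \mapsto w_n\,{}^tg^{-1}$, this records the values of $W$ on an additional explicit family of $P_n$-cosets in $G_n$ (for instance, when $m = n-1$ one detects $W$ on matrices whose last row has last entry zero, and smaller $m$ give analogous loci). The central step is then to pass from the vanishing of the family $\tilde\Psi(\tilde W,\tilde W',X') \equiv 0$ (as $\tilde W'$ varies) to pointwise vanishing of $\tilde W$ on the corresponding locus. I would proceed by induction on $n$: taking $\Wh'$ to be a universal Whittaker type $A[\GL_m(F)]$-module, the Kirillov property for $\GL_m$ (inductive hypothesis for $m<n$) guarantees that the restrictions $\tilde W'|_{P_m}$ sweep out a rich enough subset of $\ind_{U_m}^{P_m}\psi_A^{-1}$ to separate points, and vanishing of the integrals coefficient-by-coefficient in $X$ then yields pointwise vanishing of the relevant function on $G_m$. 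Aggregating over $m = 1,\dots,n-1$ and propagating via the $G_n$-action on $\Wh$ (right translates of $W$ by elements whose last row lies in the already-vanishing locus themselves satisfy the hypothesis of the theorem, so the same argument applies to them) eventually forces $W = 0$ on all of $G_n$.

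The main obstacle is this extraction of pointwise vanishing from integral vanishing in the $A$-coefficient setting. Over $\mathbb{C}$, density of Whittaker models and classical Mellin inversion do the job; over $A$ one must exploit the free rank one hypothesis on $\Wh_{U_n,\psi}$, the admissibility of $\Wh$, and Noetherianness of $A$ to convert the infinitely many $A$-linear conditions coming from the Laurent coefficients of $\tilde\Psi$ in $X$ into pointwise information on $\tilde W$. The combinatorial book-keeping needed to verify that the iteration of the $G_n$-action with this RS-based step genuinely covers all of $G_n \setminus P_n$ is a further, ultimately routine, ingredient.
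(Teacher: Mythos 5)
Your overall strategy is the same as the paper's (extend \cite[Prop.~7.5.1]{JPS1} using the family functional equation: $W|_{P_n}=0$ kills $I(X,W,W';n-m-1)$ for every $m<n$, the functional equation then kills the dual integrals, one extracts pointwise vanishing of $\rho(w_{m,n-m})\widetilde{W}$, and a coset decomposition finishes). But at the step you yourself flag as the main obstacle --- passing from vanishing of the pairings against all $W'$ to pointwise vanishing of $\widetilde{W}$ --- your proposed mechanism is a genuine gap. Inducting on $n$ and invoking the Kirillov property for $\GL_m$, $m<n$, to argue that the restrictions $\widetilde{W}'|_{P_m}$ ``separate points'' does not give what is needed: injectivity of restriction-to-$P_m$ on each individual Whittaker space is a statement about one model at a time, and says nothing about whether the totality of Whittaker functions of Whittaker-type $A'[G_m]$-modules detects an arbitrary compactly-supported-mod-$N_m$ element of $\ind_{N_m}^{G_m}\psi^{\pm 1}$ under the pairing $\int_{N_m\backslash G_m}$. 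What is required is a nondegeneracy (``completeness of Whittaker models'') theorem over $W(k)$-algebras; this is not a formal consequence of Kirillov models in lower rank, and the logical direction in the paper is the opposite of yours: completeness is the input and Kirillov the output. Concretely, the paper imports Theorem~\ref{completeness} from \cite{HMconverse} (proved there via the integral Bernstein center and universal co-Whittaker modules), applies it coefficient-by-coefficient in $X$, and needs Lemma~\ref{lemma support} to know that the restriction of $\rho(w_{m,n-m})\widetilde{W}$ to each slice $G_m^{(l)}$ is compactly supported modulo $N_m$, so that it defines an element of $\ind_{N_m}^{G_m}\psi^{-1}$ to which completeness applies. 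Your sketch neither supplies nor cites such a result, and the Noetherian/admissibility/rank-one hypotheses alone will not conjure it by the argument you outline.

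Two secondary points. First, the endgame is not just book-keeping: the paper uses Chen's explicit decomposition (Lemma~\ref{lemma Chen}) $G_n=Z_nP_n\sqcup\bigsqcup_m N_nw_nw_{m,n-m}Z_nP_n$, the stability of the hypothesis $W|_{P_n}=0$ under right translation by $P_n$, and crucially the existence of a central character on $\Wh(V,\psi_A)$ to handle the $Z_n$-factors; in the family setting that central character is itself a theorem (Proposition~\ref{prop:whittakertypeschur}, proved via Schwartz functions and the Bernstein--Zelevinsky filtration), and your propagation step silently assumes it (translation by central elements does not preserve the hypothesis otherwise). Second, the functional equation for Whittaker-type families is not simply ``available from prior work'': the earlier results are for co-Whittaker families, and its extension (Theorem~\ref{theorem multiplicity 1}, Corollary~\ref{corollary functional equation}) is one of the paper's main results --- acceptable as an input here, but it must be established, not presumed.
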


If $\mathcal{K}=W(k)[\frac{1}{\ell}]$, and we choose an isomorphism $\overline{\mathcal{K}}\cong \mathbb{C}$, then taking the $W(k)$-algebra $A=\mathbb{C}$ gives the one-point family of a usual smooth representation over $\mathbb{C}$, and Theorem~\ref{kirillovintro} specializes to the existence of Kirillov models over $\mathbb{C}$ for essentially generic representations. 

In \cite{VigGL2}, Vign\'{e}ras used Kirillov models to define integral structures for irreducible generic representations of $GL_2(F)$, and asked whether it could be done for $G_n$. Vign\'{e}ras established existence of Kirillov models for cuspidal irreducible $k[G_n]$-modules in \cite{vig_kirillov}. Taking $A=k$, Theorem~\ref{kirillovintro} gives the existence of Kirillov models in the mod-$\ell$ setting for essentially generic objects. It follows that any Kirillov function $f:P_n\to \Ql$ that is $\Zl$-valued extends uniquely to a $\Zl$-valued Whittaker function $GL_n(F)\to \Ql$, which shows that Kirillov models define integral structures for essentially generic objects (Corollary~\ref{integralstructure}).

Our proof of the Kirillov model relies on the Rankin--Selberg functional equation of Jacquet--Piatetski-Shapiro--Shalika. Thus, for our purposes, we need to establish the functional equation for \emph{Whittaker type} families: admissible, finitely generated $A[G_n]$-modules with $V_{N_n,\psi}$ free of rank one over $A$. Our second main result, Corollary~\ref{corollary functional equation}, is the functional equation in this generality. 

In the second author's thesis \cite{Moss16.1}, the functional equation was established for ``co-Whittaker'' families using Helm's theory of the integral Bernstein center \cite{h_whitt}. Co-Whittaker families are the subset of Whittaker type families whose generic subquotients are always \emph{quotients}; this means their associated Whittaker spaces do not see much of their structure (for example, if $A$ is a field, co-Whittaker $A[G_n]$-modules have \emph{irreducible} Whittaker spaces). Thus the functional equation appearing in Corollary~\ref{corollary functional equation} is stronger than that in \cite{Moss16.1}. It is obtained by showing a certain module of bilinear forms is free of rank one (Theorem~\ref{theorem multiplicity 1}), which is the most natural statement one could hope for in this setting. By specialization, one immediately recovers the local functional equations in \cite{Tate,MR0342495,JPS1,JPS2,Moss16.2,KM17,Moss16.1}. Our proof of the functional equation uses the idea of \cite{KM17}, which reinterprets the patterns of the original proof in \cite{JPS2} in the functorial framework of Bernstein and Zelevinsky in \cite{BZ77}. {However one of the main difficulties that we have to overcome for the general type of rings $A$ we consider is that $A[X^{\pm 1}]$ need not be principal, which is a property that
simplifies the situation in the original paper \cite{JPS2} and in the work \cite{KM17} where $A$ is a field.} Not only does the method adapt to our more general setting of families of Whittaker type, it avoids the co-Whittaker machinery used in \cite{Moss16.1}.

\section*{Acknowledgements}
The first author thanks the Abdus Salam School of 
Mathematical Sciences where parts of the paper were written for its hospitality, and the CNRS for giving him a ``délégation.''  The second author thanks the CNRS for its ``poste-rouge'' visitor funding, and the laboratoire de mathématiques de Versailles, where the paper started, for their hospitality, as well as the University of Utah. Both authors thank Olivier Fouquet and Rob Kurinczuk for their interest, and a referee for their useful comments and corrections.

\section{Setting}
We denote by $A$ a commutative Noetherian $W(k)$-algebra for $k$ an algebraically closed field of characteristic $\ell\neq p$. If $G$ is a locally pro-$p$ group, we denote by 
$\Rep_A(G)$ the category of smooth $A[G]$-modules. We will work inside $\Rep_A(G)$, hence we will not
mention that the modules 
that we consider are smooth anymore. We denote by $q$ the cardinality of the residue field of $F$, and fix a square root $q^{1/2}$ of $q$ in $W(k)$ which we use to define the square root of the modulus character $\d_G$ of $G$ (see \cite[2.6]{Vbook}). We then use it to normalize the different induction functors. For $V$ and $V'$ in $\Rep_A(G)$ and $\chi:G\rightarrow A^\times$ a character of $G$, we denote by 
\[\Bil_{G}(V,V',\chi)\] the set of bilinear forms from $V\times V'$ to $A$ such that \[B(g.\ ,g.\ )=\chi(g)B.\]
If $H$ is a closed subgroup of $G$, We denote by $\ind_H^G$ and $\Ind_H^G$ respectively normalized compact 
induction and normalized induction from $\Rep_A(H)$ to $\Rep_A(G)$. If $R$ is any commutative $A$-algebra and $V\in \Rep_A(G)$, we set $V_R:=V\otimes_A R\in \Rep_R(G)$ and we shall 
use both notations. 
We denote by $F$ a non archimedean local field, and by $\val:F\rightarrow \Z$ its normalized valuation. 
We denote by $G_n$ the group $\GL_n(F)$ and by $P_n$ its mirabolic subgroup (consisting of matrices with last row equal to 
$(0,\dots,0,1)$). We denote by $N_n$ the subgroup of $G_n$ the elements of which are upper triangular unipotent matrices. We occasionally drop the subscript $(-)_n$ on these groups when it is not needed. We consider a 
fixed nontrivial $W(k)^\times$-valued additive 
character $\psi$ of $F$. We set $\psi_A:F \overset{\psi}{\rightarrow} W(k)^\times \rightarrow A^\times$ where 
the arrow on the right comes from the $W(k)$-algebra structure on $A$. By a common abuse of notation we also denote 
$\psi_A:N_n\rightarrow A^\times$ the character defined by the formula 
\[\psi_A(n)=\psi_A(\sum_{k=1}^{n-1} n_{k,k+1}).\]
We will use the following basic lemma.

\begin{lemma}\label{induction and tensor product}
Let $G$ be a locally compact totally disconnected group such that $G$ has a compact open subgroup 
with pro-order invertible in $A$, $H$ a closed subgroup of $G$, and $V\in \Rep_A(H)$. If $R$ is a commutative $A$-algebra, 
then \[\ind_{H}^{G}(V\otimes_A R)\simeq \ind_{H}^{G}(V)\otimes_A R\]

\end{lemma}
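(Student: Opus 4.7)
The plan is to construct the natural $R[G]$-linear comparison map
$$\Phi_V : \ind_H^G(V) \otimes_A R \longrightarrow \ind_H^G(V \otimes_A R), \qquad f \otimes r \longmapsto \bigl(x \mapsto f(x) \otimes r\bigr),$$
and then prove it is an isomorphism. Well-definedness on the tensor product is immediate from the $A$-bilinearity of the formula; $R$-linearity and $G$-equivariance (for the right-translation action on both sides) follow directly.

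To establish bijectivity, I would pass to fixed vectors under compact open subgroups. Fix the compact open subgroup $K_0 \subset G$ whose pro-order is invertible in $A$ provided by the hypothesis, and let $K'$ range over compact open subgroups of $K_0$; each such $K'$ inherits an invertible pro-order. Since both sides of $\Phi_V$ are smooth $G$-modules, each is the union of its $K'$-invariants, so it suffices to check that $\Phi_V^{K'}$ is bijective for every $K'$. For any smooth $A[H]$-module $W$ there is a Mackey-type decomposition
$$\ind_H^G(W)^{K'} \cong \bigoplus_{g \in H \bs G / K'} W^{g K' g^{-1} \cap H},$$
obtained by evaluating $K'$-invariant sections at representatives of $H \bs G / K'$ (compact support modulo $H$ yields finite support on the double coset space, justifying the direct sum). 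Applying this to $W = V$ and to $W = V \otimes_A R$, and using that direct sums commute with $-\otimes_A R$, the problem reduces to showing $V^{g K' g^{-1} \cap H} \otimes_A R \simeq (V \otimes_A R)^{g K' g^{-1} \cap H}$ for each double coset representative $g$. The compact group $g K' g^{-1} \cap H$ is closed in $g K' g^{-1}$, so its pro-order divides that of $K_0$ and is invertible in $A$; hence an averaging idempotent in the Hecke algebra realizes the fixed-points functor, and this idempotent commutes with base change to $R$.

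The main subtlety I anticipate is a careful bookkeeping of the $\modulus^{1/2}$-normalizing twists in the Mackey decomposition, though these contribute only $A^{\times}$-valued characters of subgroups, which are compatible with base change to $R$. A cleaner alternative is a Yoneda argument: normalized compact induction $\ind_H^G$ is left adjoint to (a twist of) restriction, and $-\otimes_A R$ is left adjoint to restriction of scalars along $A \to R$, so both $V \mapsto \ind_H^G(V) \otimes_A R$ and $V \mapsto \ind_H^G(V \otimes_A R)$ are left adjoints $\Rep_A(H) \to \Rep_R(G)$ with naturally isomorphic right adjoints; the desired isomorphism is then formal. Either route avoids explicit manipulation of functions.
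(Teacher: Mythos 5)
Your main argument is correct and essentially the paper's own proof: the same natural comparison map, passage to $K'$-invariants for $K'$ inside the given $K_0$, the decomposition of $\ind_H^G(\,\cdot\,)^{K'}$ along the double cosets $H\backslash G/K'$ with values in the $gK'g^{-1}\cap H$-fixed vectors, and the identification $(V\otimes_A R)^{J}\simeq V^{J}\otimes_A R$ via the averaging idempotent; like you, the paper disposes of the normalization by working with unnormalized induction, the twist being an $A^\times$-valued character. One caveat on your proposed "cleaner alternative": for a closed subgroup $H$ that is not open, $\ind_H^G$ is \emph{not} left adjoint to (a twist of) restriction — Frobenius reciprocity pairs restriction with $\Ind_H^G$ on the other side, and for instance when $H$ is a parabolic subgroup the adjoints of $\ind_H^G=\Ind_H^G$ are Jacquet functors — so the formal adjunction route would require identifying the genuine right adjoint of $\ind_H^G$ and its compatibility with restriction of scalars, which is not automatic; rely on your Mackey-type argument, which is complete.
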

\begin{proof} 
For this proof only, we consider non normalized induction, for lighter notations. We denote by \[b:\ind_{H}^{G}(V)\otimes_A R\rightarrow \ind_{H}^{G}(V\otimes_A R)\] the obvious map. For any compact open subgroup $K$ 
of $G$, it sends $\ind_{H}^{G}(V)^K\otimes_A R$ to $\ind_{H}^{G}(V\otimes_A R)^K$, and we denote by $b_K$ its restriction to 
$\ind_{H}^{G}(V)^K\otimes_A R$. Let $K_0$ be a compact open subgroup of $G$ with pro-order invertible in $K$, and consider only such groups $K$ inside $K_0$. 
Set $(g_i)_i$ a set of representatives of $H\backslash G /K$. For $w\in V\otimes_A R$ fixed by $H\cap g_iKg_i^{-1}$ we set 
$f_{g_i,w}^K$ the only function in $\ind_{H}^{G}(V\otimes_A R)^K$ supported on $Hg_iK$ such that $f(g_i)=w$. Then any function in $\ind_{H}^{G}(V\otimes_A R)^K$ can be written
uniquely (i.e. with the $w_i$ unique)
as a finite sum \[f=\sum_i f_{g_i,w_i}^K\] and a similar statement is true for $\ind_{H}^{G}(V)^K$.
Note that $(V\otimes_A R)^{K_H}=V^{K_H}\otimes_A R$ for $K_H$ any small enough compact open subgroup of $H$ because averaging over $K_H$ is possible. 
Now each $w_i$ above is of the form $\sum_{k_i} v_{k_i}\otimes_A r_{k_i}$ with $v_{k_i}\in V^{H\cap g_iKg_i^{-1}}$. 
We set \[b'_K(f)=\sum_i b'_K(f_{g_i,w_i}^K)\] where \[b'_K(f_{g_i,w_i}^K)=\sum_{k_i} f_{g_i,v_{k_i}}^K \otimes_A r_{k_i} .\] 
One checks that $b'_K$ is well-defined, and provides an inverse to $b_K$. 
Then the maps $b'_K$ when $K$ varies form a compatible system, so that there is 
$b':\ind_{H}^{G}(V\otimes_A R)\rightarrow \ind_{H}^{G}(V)\otimes_A R$ such that $b'_{|K}=b'_K$. In particular $b'=b^{-1}$.
\end{proof}

We embed the group $G_{n-1}$ into $P_n$ via the map $g\mapsto \diag(g,1)$, so that $P_n$ becomes the semi-direct product of 
$G_{n-1}$ with its unipotent radical $U_n$. We define the derivative functors $\Phi^{\pm}$ and $\Psi^{\pm}$ as in \cite{BZ77} with respect to $\psi_A$:

\begin{itemize}

 \item The functor $\Phi^{-}:\Rep_A(P_n)\rightarrow \Rep_A(P_{n-1})$. For $(\pi,V)\in \Rep_A(P_n)$, one sets 
\[\Phi^{-} V =V_{U_n,\psi_A}=V/\langle \pi(u)v-\psi_A(u)v,\ (u,v)\in U_n\times V \rangle\] and $P_{n-1}$ acts on $\Phi^{-}V$ by
 $h.\overline{v}= \overline{\delta_{U_n} (h)^{-1/2}\pi (\diag(h,1))v}$, for $h\in P_{n-1}$.

\item The functor $\Phi^{+}:\Rep_A(P_{n-1})\rightarrow \Rep_A(P_n)$, defined by 
\[\Phi^{+} \pi = \ind_{P_{n-1}U_n}^{P_n}(\pi \otimes \psi_A).\]

\item The functor $\Psi^{-}:\Rep_A(P_n)\rightarrow \Rep_A(G_{n-1})$. For $(\pi,V)\in \Rep_A(P_n)$, one sets 
\[\Psi^{-} V =V_{U_n}=V/\langle \pi(u)v-v,\ (u,v)\in U_n\times V \rangle\] and $G_{n-1}$ acts on $\Psi^{-}V$ by
 $g.\overline{v}= \overline{\delta_{U_n} (g)^{-1/2}\pi (\diag(g,1))v}$ for $g\in G_{n-1}$.

\item The functor $\Psi^{+}:\Rep_A(G_{n-1})\rightarrow \Rep_A(P_n)$ defined by 
\[\Psi^{+} \pi = \ind_{G_{n-1}U_n}^{P_n}(\pi \otimes \psi_A)= \d_{U_n}^{1/2}\pi \otimes \psi_A.\]

\end{itemize}
The $i$'th Bernstein--Zelevinsky derivative functor $(-)^{(i)}:\Rep_A(P_n)\rightarrow \Rep_A(G_{n-i})$ is defined as $$M^{(i)}= \Psi^-(\Phi^-)^{i-1}M$$ for $M\in \Rep_A(P_n)$.

\begin{rem}
Note that in our context (as opposed to the special case $A=k$), there is no reason for the set of characters $F\to A^{\times}$ to be conjugate under the action of the diagonal torus of $G_n$. For example if $A = k\times k$, one cannot conjugate one factor to the other. However our character $\psi_A$ is ``rigid'' in the sense that it is obtained by extending scalars along the fixed structure morphism $W(k)\to A$. All nondegenerate \emph{rigid} characters $F\to A^{\times}$ are conjugate. In order to lighten notation, we omit any reference to the $W(k)$-algebra structure of $A$ while writing $\psi_A$ and the Bernstein-Zelevinsky functors.

We inevitably have to consider derivatives with respect to $(\psi_A)^{-1} = (\psi^{-1})_A$. Since $\psi$ and $\psi^{-1}$ are in the same orbit under the action of the diagonal torus, the same is true for $\psi_A$ and $\psi_A^{-1}$. 
\end{rem}

Amongst the properties satisfied by the Bernstein-Zelevinsky functors, hereunder are those that we shall need.

\begin{prop}\label{proposition properties of derivatives}
\begin{enumerate}[1)]
\item The Bernstein-Zelevinsky filtration: if $M$ is an $A[P_n]$-module then 
$\{0\}\subset M_n \subset \dots \subset M_1 =M$ with $M_i\simeq (\Phi^+)^{i-1}(\Phi^-)^{i-1} M$ and 
$M_{i}/M_{i+1}\simeq (\Phi^+)^{i-1}\Psi^+(M^{(i)})$.
\item If $V$ and $V'$ are $A[G_n]$-modules, $\chi:G_{n+1}\rightarrow A^\times$ is a character, and 
$M$ and $M'$ are smooth $A[P_n]$-representations, then 
\[\Bil_{P_{n+1}}(\Psi^+V,\Psi^+V',\chi)\simeq \Bil_{G_n}(V, V',\chi)\]
\[\Bil_{P_{n+1}}(\Phi^+M,\Phi^+M',\chi)\simeq \Bil_{P_n}(M,M',\chi)\]
\[\Bil_{P_{n+1}}(\Psi^+V,\Phi^+M,\chi)=\{0\}\]
\item Let $R$ be a commutative $A$-algebra. If $M$ is an $A[P_n]$-module and $V$ is an $A[G_n]$-module, then 
$\Phi^{\pm}(M\otimes_A R)\simeq \Phi^{\pm}(M)\otimes_A R$, 
 $\Psi^{-}(M\otimes_A R)\simeq \Psi^{-}(M)\otimes_A R$ and $\Psi^{+}(V\otimes_A R)\simeq \Psi^{+}(V)\otimes_A R$.
\end{enumerate}
\end{prop}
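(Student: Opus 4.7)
Part~(1) is the Bernstein--Zelevinsky filtration of \cite{BZ77}. The key input is the short exact sequence
\[
0 \to \Phi^+\Phi^-M \to M \to \Psi^+\Psi^-M \to 0
\]
for every $M\in\Rep_A(P_n)$, obtained by decomposing $M$ under the abelian ``last-column'' unipotent $U\subset P_n$ into its $\psi_A$-isotypic and trivial-isotypic components. This decomposition is valid over $A$ because $U$ is pro-$p$ and $\ell\neq p$, so the relevant projectors live in $A$. Iterating, together with the standard BZ identities $\Phi^-\Phi^+=\Id$, $\Psi^-\Psi^+=\Id$, $\Phi^-\Psi^+=0=\Psi^-\Phi^+$, yields the filtration $M_i=(\Phi^+)^{i-1}(\Phi^-)^{i-1}M$ with the stated graded pieces. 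The arguments of \cite{BZ77} are purely functorial and transfer directly to the $A$-coefficient setting.

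For part~(2), I identify $\Bil_H(X,Y,\chi)=\Hom_H(X\otimes_A Y, A_\chi)$ where $A_\chi$ denotes $A$ with $H$-action by $\chi$, and I let $U\subset P_{n+1}$ denote the kernel of the projection $P_{n+1}\twoheadrightarrow G_n$ (the ``last-column'' mirabolic unipotent). The first identity is immediate: $\Psi^+$ is extension by the trivial $U$-action, and $\chi|_U=1$ because $\chi$ factors through the determinant; hence any $P_{n+1}$-equivariant form on $\Psi^+(V)\otimes_A\Psi^+(V')$ is automatically $U$-invariant, so corresponds to a $G_n$-equivariant form on $V\otimes_A V'$. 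The third identity follows by the same reasoning: any $P_{n+1}$-equivariant form on $\Psi^+(V)\otimes_A\Phi^+(M)$ must be $U$-invariant in the $\Phi^+(M)$-argument, so it factors through $(\Phi^+M)_U=\Psi^-\Phi^+(M)=0$. For the second identity, I apply the projection formula for compact induction,
\[
\Phi^+M\otimes_A\Phi^+M'\;\cong\;\ind_{P_nU}^{P_{n+1}}\bigl((M\otimes\psi_A)\otimes_A\Phi^+M'|_{P_nU}\bigr),
\]
followed by Frobenius reciprocity; the resulting Hom-space on $P_nU$ factors through the $\psi_A^{-1}$-twisted $U$-coinvariants of $\Phi^+M'$, which equal $\Phi^-\Phi^+(M')=M'$, giving the identification with $\Hom_{P_n}(M\otimes_A M',A_\chi)=\Bil_{P_n}(M,M',\chi)$.

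For part~(3): $\Psi^+$ is extension by trivial action, so commutes with $\otimes_A R$ tautologically. The coinvariant functors $\Psi^-$ and $\Phi^-$ (trivial and $\psi_A$-twisted coinvariants respectively) commute with $\otimes_A R$ by right-exactness of base change. Finally $\Phi^+$ is a compact induction twisted by $\psi_A$, and its commutation with $\otimes_A R$ is exactly Lemma~\ref{induction and tensor product}; the pro-order hypothesis holds since $U$ is pro-$p$ with $\ell\neq p$. The main technical subtlety will be the second identity in part~(2), where one must combine the projection formula with $\Phi^-\Phi^+=\Id$; elsewhere the proofs are either direct transfers from \cite{BZ77} or immediate consequences of the right-exactness of $\otimes_A R$.
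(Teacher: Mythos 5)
Your overall strategy is the same as the paper's: the authors simply observe that the arguments of \cite{BZ77} carry over to $W(k)$- and hence $A$-coefficients (the functors factor through the forgetful functor), and they deduce part 3) for $\Phi^+,\Psi^+$ from Lemma~\ref{induction and tensor product}; your treatment of parts 1) and 3), and of the first and third identities in 2), is a correct, more detailed rendering of exactly that. One caveat in 1): the last-column subgroup $U\subset P_n$ is not compact, so there are no projectors onto $U$-isotypic components and $M$ is \emph{not} a direct sum of a trivial-isotypic and a $\psi_A$-isotypic part; what transfers from \cite{BZ77} is the (generally non-split) exact sequence $0\to\Phi^+\Phi^-M\to M\to\Psi^+\Psi^-M\to 0$, whose proof only needs that the compact open subgroups of $U$ are pro-$p$ with $p$ invertible in $W(k)\subset A$. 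Your phrasing should be adjusted accordingly, but the substance is right.

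Two points in your argument for the second identity of 2) need repair. First, $P_nU$ is closed but not open in $P_{n+1}$, so compact induction $\ind_{P_nU}^{P_{n+1}}$ is not left adjoint to restriction, and ``Frobenius reciprocity'' is not available as stated. The correct substitute is \cite[Proposition 2.29]{BZ76} (the very statement the paper invokes in Lemma~\ref{lemme reduction 2}), which identifies $\Bil_{P_{n+1}}(\ind_{P_nU}^{P_{n+1}}(M\otimes\psi_A),\Phi^+M',\chi)$ with a space of $\Bil_{P_nU}$'s up to a modulus twist; its proof over $A$ uses $A$-valued invariant measures, again available because $p$ is invertible, and the modulus factors are exactly what the normalizations of the functors are designed to absorb. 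Second, the $(U,\psi_A^{-1})$-twisted coinvariants of $\Phi^+M'$ are \emph{not} $\Phi^-\Phi^+(M')$ if both functors are taken with respect to $\psi_A$ (the paper's default convention): they are $M'$ with the $P_n$-action conjugated by a diagonal element (e.g.\ $\diag(1,\dots,1,-1)\in G_n$) carrying $\psi_A$ to $\psi_A^{-1}$, which normalizes $P_n$. This is harmless---it is precisely the point of the Remark in Section 2, and in the application one of the two factors is in any case built from $\psi_{A'}^{-1}$---but as written your identification conflates the two characters and should be stated with the conjugation (or with $\Phi^+M'$ taken with respect to $\psi_A^{-1}$). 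With these two repairs your proof of 2) is complete and is in substance the argument of \cite{BZ77} (as adapted in \cite{KM17}) that the paper cites.
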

\begin{proof}
For the part 1), the proof in \cite{BZ77} for $\C[P_n]$-modules holds without modification for $W(k)[P_n]$-modules. Now if one starts with an $A[P_n]$-module, 
hence a $W(k)[P_n]$-module, the filtration by $W(k)[P_n]$-modules is in fact a filtration by $A[P_n]$-modules. Indeed the functors $\Phi^{\pm}$ and $\Psi^{\pm}$ factor through 
the forgetful functor which takes an $A[G]$-module to a $W(k)[G]$-module for $G=P_n$ or $G_{n-1}$.
For part 2), the proofs in \cite{BZ77} hold over $A$ directly. 
Finally, Part 3) is obvious for $\Psi^-$ and $\Phi^-$ and it is a consequence of Lemma \ref{induction and tensor product} for $\Phi^+$ and $\Psi^+$.
\end{proof}

Finally we end this section with the following definition which is central for our purpose. 

\begin{definition} We say that an $A[G_n]$-module $V$ is of Whittaker type if it is admissible, is $A[G_n]$-finitely generated, and if $V^{(n)}$ is isomorphic 
to $A$ as an $A$-module.
\end{definition}

If $V$ is of Whittaker type, then $\Hom_A(V^{(n)},A)\simeq \Hom_{N_n}(V,\psi_A)\simeq 
\Hom_{G_n}(V,\Ind_{N_n}^{G_n}(\psi_A))$ is a free $A$-module of rank one. We denote by $\Wh(V,\psi_A)$ the Whittaker {space} of 
$V$ with respect to $\psi_A$, i.e. the image of $V$ in $\Ind_{N_n}^{G_n}(\psi_A)$ via any homomorphism corresponding to an isomorphism between $V^{(n)}$ and $A$. Observe that in this case the map $V\rightarrow V^{(n)}\simeq A$ factors as \[V\rightarrow \Wh(V,\psi_A)\overset{\mathrm{Ev}_{I_n}}{\rightarrow} A,\] where $\mathrm{Ev}_{I_n}$ denotes the function given by evaluating at the identity. This implies that $\Wh(V,\psi_A)^{(n)} \simeq V^{(n)} \simeq A$. We will use the following observation. Set $w_n=\begin{pmatrix} & & 1\\ & \iddots & \\ 1 & & \end{pmatrix}\in G_n$. If $V$ is an $A[G_n]$-module of Whittaker type, then the $A[G_n]$-module 
$\widetilde{V}$ with underlying $A$-module $V$, but with $G_n$-action twisted by $i:g\mapsto w_n {}^t g^{-1} w_n^{-1}$ is also of Whittaker type. Indeed admissibility and finite generation are obvious, and because $N_n$ is $i$-stable and $\psi(i(n))=\psi^{-1}(n)$ for $n\in N_n$, we have $\widetilde{V}^{(n)}\simeq V^{(n)}$. Moreover if for $W\in \Wh(V,\psi_A)$ we set \[\widetilde{W}(g)=W(w_n {}^t g^{-1})\] for $g\in G_n$, then the map $W\mapsto \widetilde{W}$ is an $A$-module isomorphism from $\Wh(V,\psi_A)$ to $\Wh(\widetilde{V},\psi_A^{-1})$.

\section{The Rankin-Selberg functional equation for representations of Whittaker type}

\subsection{Schur's lemma for Whittaker spaces}




Given a representation $V$ of such that $V^{(n)}\cong A$, its so-called ``Schwartz functions'' are the sub $A[P_n]$-module which is the image of the embedding 
$$(\Phi^+)^{n-1}\Psi^+(\textbf{1}_A)\to V.$$ The proof of the following proposition gives a hint as to their importance in this circle of ideas.

\begin{prop}\label{prop:whittakertypeschur}
Let $A$ be a Noetherian $W(k)$-algebra and let $V$ be an $A[G]$-module of Whittaker type. Then the natural map
$$A\to \End_{A[G]}(\Wh(V,\psi_A))$$ is an isomorphism (i.e. the Whittaker space of $V$ satisfies Schur's lemma).
\end{prop}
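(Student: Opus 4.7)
The plan is to exploit the rank-one freeness of $\Hom_{G_n}(V,\Ind_{N_n}^{G_n}(\psi_A))$ over $A$, which is a direct consequence of the Whittaker type hypothesis via the chain of isomorphisms
\[
\Hom_{G_n}\bigl(V,\Ind_{N_n}^{G_n}(\psi_A)\bigr)\simeq \Hom_{N_n}(V,\psi_A)\simeq \Hom_A(V^{(n)},A)\simeq A
\]
recorded in the setting section. The idea is to convert any endomorphism of $\Wh(V,\psi_A)$ into an element of this $\Hom$-space and then read off the scalar.

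Write $\Wh:=\Wh(V,\psi_A)$, and let $q\colon V\twoheadrightarrow \Wh$ and $\iota\colon \Wh\hookrightarrow \Ind_{N_n}^{G_n}(\psi_A)$ be the canonical surjection and inclusion. By construction $\iota\circ q$ is the Whittaker embedding corresponding to a chosen isomorphism $V^{(n)}\simeq A$, and under the chain of isomorphisms above it corresponds to $\Id_A$; in particular $\iota\circ q$ is an $A$-basis of the rank-one module $\Hom_{G_n}(V,\Ind_{N_n}^{G_n}(\psi_A))$. Given $\phi\in\End_{A[G_n]}(\Wh)$, the composite $\iota\circ\phi\circ q\in\Hom_{G_n}(V,\Ind_{N_n}^{G_n}(\psi_A))$ therefore equals $a\cdot(\iota\circ q)$ for a unique $a\in A$. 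Surjectivity of $q$ yields $\iota\circ\phi=a\iota$, and injectivity of $\iota$ forces $\phi=a\cdot\Id_{\Wh}$. This proves that $A\to\End_{A[G_n]}(\Wh)$ is surjective.

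For injectivity, the Whittaker functional $\lambda_V\colon v\mapsto (\iota\circ q)(v)(e)$ is the composition of the surjection $V\twoheadrightarrow V^{(n)}$ with the chosen isomorphism $V^{(n)}\simeq A$, and is therefore surjective. Consequently the evaluation map $W\mapsto W(e)$ is a surjection $\Wh\twoheadrightarrow A$, and one can pick $W_0\in\Wh$ with $W_0(e)=1$. If $a\in A$ acts as zero on $\Wh$, then taking $W=W_0$ in $a\cdot W=0$ and evaluating at $e$ gives $a=0$.

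I do not expect a serious obstacle. The one design choice worth highlighting is to argue on $V$ via the composite $\iota\circ\phi\circ q$ rather than directly inside $\End_{A[G_n]}(\Wh)$: the direct approach would first require checking that $\Wh$ itself is of Whittaker type, notably that $\Wh^{(n)}\simeq A$, which is true (it follows from exactness of the derivative functors applied to $V\twoheadrightarrow\Wh$) but is an additional verification. Our detour through $\Ind_{N_n}^{G_n}(\psi_A)$ uses the Whittaker type hypothesis on $V$ alone.
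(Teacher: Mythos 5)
Your argument is correct, but it follows a genuinely different route from the paper. You reduce everything to the fact, recorded in the Setting section, that $\Hom_{G_n}(V,\Ind_{N_n}^{G_n}(\psi_A))\simeq\Hom_A(V^{(n)},A)$ is free of rank one with the Whittaker map $\iota\circ q$ as a basis: any $\phi\in\End_{A[G]}(\Wh(V,\psi_A))$ gives $\iota\circ\phi\circ q=a\,(\iota\circ q)$, and surjectivity of $q$ plus injectivity of $\iota$ force $\phi=a\,\Id$; injectivity of $A\to\End$ then comes from a Whittaker function with $W_0(e)=1$, obtained from the surjection $V\twoheadrightarrow V^{(n)}\simeq A$. (Strictly, $\iota\circ q$ corresponds to the chosen isomorphism $V^{(n)}\simeq A$ rather than literally to $\Id_A$, but either way it is a generator, so the step stands.) The paper instead argues through the Schwartz functions $\mathcal{S}(V,\psi_A)$, the image of $(\Phi^+)^{n-1}\Psi^+(\mathbf{1}_A)$: using the Bernstein--Zelevinsky filtration it shows any $G$-endomorphism preserves $\mathcal{S}(V,\psi_A)$, that the successive restriction maps $\End_G(\Wh)\to\End_G(\widetilde{\mathcal{S}})\to\End_P(\mathcal{S})$ are injective (the first because every nonzero submodule of $\Wh(V,\psi_A)$ has nonzero top derivative), and that $\mathcal{S}(V,\psi_A)$ satisfies Schur's lemma by the adjunctions of $\Phi^{\pm}$. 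Your proof is shorter and avoids the derivative machinery entirely, which is a genuine simplification for the proposition as stated; what the paper's proof buys in exchange is the corollary recorded immediately afterwards, namely that restriction to $\mathcal{S}(V,\psi_A)$ induces an isomorphism $\End_G(\Wh(V,\psi_A))\to\End_P(\mathcal{S}(V,\psi_A))$ (a generalization of \cite[3.2.3 Lemma]{EH14}), which your argument does not produce and which motivates the role Schwartz functions play later in the paper.
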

\begin{proof}
We identify $\Wh(V,\psi_A)^{(n)} = V^{(n)}=A$. Let $\mathcal{S}(V,\psi_A)$ denote the Schwartz functions of $\Wh(V,\psi_A)$. Let $\phi$ be a $G$-equivariant endomorphism of $\Wh(V,\psi_A)$. Since $\phi$ is $P$-equivariant, it follows that $\phi(\mathcal{S}(V,\psi_A))\subset \mathcal{S}(V,\psi_A)$, this is a consequence of the Bernstein-Zelevinsky filtration as the only possible nonzero derivative of $\phi(\mathcal{S}(V,\psi_A))$ must be $\phi(\mathcal{S}(V,\psi_A))^{(n)}$ as it is the case for $\mathcal{S}(V,\psi_A)$. Let $\widetilde{\mathcal{S}}(V,\psi_A)$ denote the $A[G]$-submodule of $\Wh(V,\psi_A)$ generated by $\mathcal{S}(V,\psi_A)$. Thus the following map is well defined,
\begin{align*}
\End_G(\Wh(V,\psi_A)) &\to \End_G(\widetilde{\mathcal{S}}(V,\psi_A))\\
\phi&\mapsto \phi|_{\widetilde{\mathcal{S}}(V,\psi_A)}.
\end{align*}
Suppose $\phi|_{\widetilde{\mathcal{S}}(V,\psi_A)}\equiv 0$. Then $\phi$ defines a morphism from the quotient $Q:=\Wh(V,\psi_A)/\widetilde{\mathcal{S}}(V,\psi_A)$ to $\Wh(V,\psi_A)$. But every nonzero submodule in the target space $\Wh(V,\psi_A)$ has nonzero highest derivative, so the fact that $Q^{(n)}=0$ (by exactness of the derivative functors) implies $\phi(Q)=0$. Therefore $\phi\equiv 0$, so the restriction map is injective.

On the other hand, since $\mathcal{S}(V,\psi_A)$ generates $\widetilde{\mathcal{S}}(V,\psi_A)$, the map $\phi\mapsto \phi|_{\mathcal{S}(V,\psi_A)}$ defines an injection $$\End_G(\widetilde{\mathcal{S}}(V,\psi_A))\to \End_P(\mathcal{S}(V,\psi_A)).$$ But $\mathcal{S}(V,\psi_A)$ satisfies Schur's lemma, by the adjunctions of $\Phi^+$ and $\Phi^-$ (e.g. \cite[3.1.16]{EH14}). Thus the composition
$$A\to \End_G(\Wh(V,\psi_A))\hookrightarrow \End_G(\widetilde{\mathcal{S}}(V,\psi_A))\hookrightarrow \End_G(\mathcal{S}(V,\psi_A))$$ is an isomorphism. Thus the maps are all isomorphisms.
\end{proof}

We note the following corollary of the previous proof, which is a sort of generalization of \cite[3.2.3 Lemma]{EH14}.

\begin{corollary}
If $V$ is an $A[G]$-module of Whittaker type, and $\mathcal{S}(V,\psi_A)$ denotes the Schwartz functions of $\Wh(V,\psi_A)$, then restriction to $\mathcal{S}(V,\psi_A)$ defines an isomorphism $$\End_G(\Wh(V,\psi_A)) \to \End_P(\mathcal{S}(V,\psi_A)).$$
\end{corollary}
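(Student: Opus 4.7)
The plan is to deduce this corollary directly from the chain of arrows constructed in the proof of Proposition \ref{prop:whittakertypeschur}. There we built
\[
A \longrightarrow \End_G(\Wh(V,\psi_A)) \longrightarrow \End_G(\widetilde{\mathcal{S}}(V,\psi_A)) \longrightarrow \End_P(\mathcal{S}(V,\psi_A)),
\]
where the first map is the structure map and the latter two are restriction. The map asserted to be an isomorphism in the corollary is precisely the composite of the last two arrows, since a $G$-equivariant endomorphism of $\Wh(V,\psi_A)$ restricts to a $P$-equivariant endomorphism of the $P$-submodule $\mathcal{S}(V,\psi_A)$, and no more than $P$-equivariance can be demanded because $\mathcal{S}(V,\psi_A)$ is only stable under $P$.

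The key observation is that the proof of the proposition already established two separate pieces of information about this chain: each of the two restriction arrows is injective (the first because the quotient $\Wh(V,\psi_A)/\widetilde{\mathcal{S}}(V,\psi_A)$ has vanishing top derivative and hence no nonzero map into $\Wh(V,\psi_A)$, the second because $\mathcal{S}(V,\psi_A)$ generates $\widetilde{\mathcal{S}}(V,\psi_A)$ as an $A[G]$-module), and moreover the total composite from $A$ is an isomorphism (this uses Schur's lemma for $\mathcal{S}(V,\psi_A)$, which in turn follows from the adjunctions of $\Phi^{\pm}$ applied to the description $\mathcal{S}(V,\psi_A) \simeq (\Phi^+)^{n-1}\Psi^+(\mathbf{1}_A)$). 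A chain of injections whose composite is an isomorphism forces every arrow in the chain to be an isomorphism. Therefore the composite of the last two restriction maps, which is the map appearing in the corollary, is an isomorphism.

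I do not anticipate a serious obstacle here, as every ingredient has already been isolated in the proof of the proposition; the corollary is essentially a bookkeeping statement that packages two of the arrows of that proof into a single restriction map. The only point to highlight is that having all three arrows of the chain be isomorphisms is a stronger conclusion than what the proposition alone records (namely that the outer composite $A \to \End_G(\Wh(V,\psi_A))$ is an isomorphism), and this stronger conclusion is exactly what the corollary states.
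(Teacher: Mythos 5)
Your proposal is correct and matches the paper's own reasoning: the paper states this corollary as an immediate consequence of the proof of Proposition \ref{prop:whittakertypeschur}, where the conclusion ``thus the maps are all isomorphisms'' already forces the composite of the two restriction arrows, which is exactly the map $\End_G(\Wh(V,\psi_A))\to\End_P(\mathcal{S}(V,\psi_A))$, to be an isomorphism. Your bookkeeping argument (injectivity of each restriction arrow plus the outer composite from $A$ being an isomorphism, via Schur's lemma for $\mathcal{S}(V,\psi_A)$) is precisely the intended justification.
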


\subsection{Bilinear forms}

We let $A'$ be a commutative finitely generated $W(k)$-algebra. As in \cite{Moss16.1}, \cite{Moss16.2}, we introduce $S$ the multiplicative subset of $(A\otimes_{W(k)} A')[X^{\pm1}]$ given by the Laurent polynomials with dominant and trailing coefficient equal 
to $1$, we set \[R= S^{-1} \left((A\otimes_{W(k)} A')[X^{\pm 1}]\right).\] 
We consider invariant measures with values in $R$ as in \cite{KM17}. Note that 
$R$ is a Noetherian $W(k)$-algebra. For $r\in R^\times$, we write \[\chi_r:\GL_n(F)\rightarrow  R^\times\] for 
the composition of the unramified character of $F^\times$ sending a uniformizer of $F$ to $r^{-1}$ with the determinant on 
$\GL_n(F)$. For $m<n$ two positive integers we set 
\[U_{m+1,n-m-1}=\{\begin{pmatrix} I_{m+1} & x \\ & y \end{pmatrix},\ x \in \mathcal{M}_{m+1,n-m-1},\ y\in N_{n-m-1}\}\]
and consider $\psi$ as a character of $U_{m+1,n-m-1}$ through the projection on the $y$ coordinate. We consider 
$V'$ an $A'[G_m]$-module of Whittaker type and see it as an $A'[G_m U_{m+1,n-m-1}]$-module by making 
$U_{m+1,n-m-1}$ act trivially. We now follow the original method of \cite{JPS2}, as adapted in \cite{KM17}. Note that \cite{KM17} contains a typo throughout the whole section 3.2: $V$ and $V'$ should be replaced everywhere by their Whittaker models.

 Before stating the functional equation, we do some reduction steps backwards. For $X\subset G_m$, we set \[X^{(l)}=\{g\in X,\ \mathrm{val}(\det(g))=l\}.\] We recall the following (see \cite[proposition 2.3.6]{JPS1}, however in our context where 
 $A$ is not necessarily a domain we have to be a little careful so we give the proof): 
 
\begin{lemma}\label{lemma support}
For $W\in \Ind_{N_n}^{G_n}(\psi_A)$ and fix $l\in \Z$. Then $W_{|G_{n-1}^{(l)}}$ is compactly supported modulo $N_{n-1}$.
\end{lemma}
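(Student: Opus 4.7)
The plan is to extend the classical Whittaker vanishing argument (compare \cite[Prop.~2.3.6]{JPS1}) to the $A$-valued setting. The main strategy is: smoothness of $W$ and the Iwasawa decomposition reduce the problem to bounding the diagonal torus component of $g\in G_{n-1}^{(l)}$, and the $\psi_A$-transformation rule under certain simple-root unipotents of $N_n$ (including one that lies \emph{outside} $N_{n-1}$) provides the needed bounds. The only new ingredient beyond the complex-valued case is a careful treatment of the coefficient-ring subtlety, namely that $A$ may have zero divisors.

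First, I fix a compact open subgroup $K'\subset G_n$ under which $W$ is right-invariant. Using the Iwasawa decomposition $G_{n-1}=N_{n-1}T_{n-1}K_{n-1}$ (with $T_{n-1}$ the diagonal torus and $K_{n-1}=\GL_{n-1}(\o)$) together with the identity $W(ng)=\psi_A(n)W(g)$ for $n\in N_{n-1}\subset N_n$, the problem reduces to showing that for each $k$ in a finite set $k_1,\dots,k_N$ of representatives for $K_{n-1}/(K_{n-1}\cap K')$, the set $\{a\in T_{n-1}^{(l)}:W(ak_j)\neq 0\}$ is finite. Replacing $W$ by $W_j:=W(\cdot k_j)\in\Ind_{N_n}^{G_n}(\psi_A)$, which is right-invariant under $k_j^{-1}K'k_j$, we may assume $k_j=1$ and work directly with $a=\diag(a_1,\dots,a_{n-1},1)\in G_n$.

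For the vanishing, fix $1\leq i\leq n-1$ and $t\in F$ and consider the unipotent $u_i(t):=I+tE_{i,i+1}\in N_n$; by genericity, $\psi_A(u_i(t))=\psi_A(t)$, and a direct computation gives $a^{-1}u_i(t)a=I+t(a_{i+1}/a_i)E_{i,i+1}$, with the convention $a_n:=1$. Right $K'$-invariance combined with $W(u_i(t)a)=\psi_A(t)W(a)$ yields
\[(\psi_A(t)-1)W(a)=0\qquad\text{for every }t\in F\text{ with }\val(t)+\val(a_{i+1}/a_i)\geq M,\]
where $M$ depends only on $K'$. Promoting this to $W(a)=0$ requires $\psi_A(t)-1\in A^\times$ for some admissible $t$: this is the main obstacle, and it is resolved by the hypotheses that $\psi$ is $W(k)^\times$-valued and $\ell\neq p$. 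Indeed, every nontrivial value of $\psi$ is a primitive $p^r$-th root of unity $\zeta\in W(k)$, whose reduction modulo $\ell$ is still of multiplicative order $p^r$ in $k^\times$ (since $p\neq\ell$), hence is $\neq 1$, so $\zeta-1\notin \ell W(k)$ is a unit in $W(k)$, and its image in $A$ is a unit. Choosing $t$ with $\val(t)\leq -c-1$ (where $c$ is the conductor of $\psi$) to guarantee $\psi(t)\neq 1$, we conclude that $W(a)=0$ whenever $\val(a_i)-\val(a_{i+1})\leq -c-1-M$ for some $i\in\{1,\dots,n-1\}$ (with $\val(a_n)=0$).

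These $n-1$ lower bounds, combined with the determinant constraint $\sum_{i=1}^{n-1}\val(a_i)=l$, force each $\val(a_i)$ into a bounded interval, so the set $\{a\in T_{n-1}^{(l)}:W_j(a)\neq 0\}$ is finite for each $j$. By the reduction of the second paragraph, this yields the required compactness of the support of $W|_{G_{n-1}^{(l)}}$ modulo $N_{n-1}$.
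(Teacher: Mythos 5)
Your proof is correct and follows essentially the same route as the paper's: Iwasawa reduction to the diagonal torus, playing right $K'$-invariance off against the $\psi_A$-transformation under simple-root unipotents, the key observation that $\psi(t)-1$ lies in $W(k)^\times$ (hence is invertible in $A$) whenever $\psi(t)\neq 1$ because $\ell\neq p$, and the determinant constraint defining $G_{n-1}^{(l)}$ to convert the resulting lower bounds on the $\val(a_i)-\val(a_{i+1})$ into boundedness of all $\val(a_i)$. If anything you are slightly more complete than the paper: you handle the compact factor of the Iwasawa decomposition explicitly and, with the convention $a_n=1$, you also use the root subgroup $I+tE_{n-1,n}$ of $N_n$ lying outside $G_{n-1}$, which is genuinely needed to bound $\val(a_{n-1})$ and which the paper's write-up (using only $u_i\in G_{n-1}$, $i\leq n-2$) leaves implicit.
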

\begin{proof}
Set $A_n$ the diagonal torus of $G_n$. By Iwasawa decomposition it is sufficient to check that $W_{|A_{n-1}^{(l)}}$ is compactly supported. However because of the restriction on the determinant in $A_{n-1}^{(l)}$ it is sufficient to prove that there is $r\in \Z$ such that the condition $\val(a_i)-\val(a_{i+1})\leq r$ for $i=1,\dots,n-2$ implies that $W(\diag(a_1,\dots,a_{n-1},1))$ is zero. Consider \[u_i(x)=\diag(1,\dots,1,\begin{pmatrix} 1 & x \\ & 1\end{pmatrix},1,\dots, 1)\in G_{n-1}\] where the $2\times 2$ matrix is in position $i$, then $W$ is fixed by $u_i(x)$ on the right for $x$ nonzero but small enough, but on the other hand, denoting by $\rho$ the action by right translation, it is equal to \[(\rho(u_i(x))W-W)(\diag(a_1,\dots,a_{n-1},1))=(\psi(\frac{a_ix}{a_{i+1}})-1)W(\diag(a_1,\dots,a_{n-1},1)).\] Now 
$\psi(\frac{a_ix}{a_{i+1}})-1$ belongs to $W(k)^\times$ for $\val(a_i)-\val(a_{i+1})$ small enough, hence is invertible in $A$ and the result follows.
\end{proof}
 
For $W\in (\Phi^+)^{m-1}\Psi^+(\1_R)=\ind_{N_m}^{P_m}(\psi_R)$ and 
$W' \in \Wh(V',\psi_{A'}^{-1})_R$ we set 
\[I_1(X,W,W')=\sum_{l\in \mathbb{Z}} [\int_{N_m\backslash P_m^{(l)}} W(p) W'(p)dg]q^{l/2}X^l=\int_{N_m\backslash P_m} W(p)  W'(p)  \chi_{q^{-1/2}X^{-1}}(p)dp.\] As in \cite{Moss16.1}, one has $I_1(X,W,W')\in R$.

\begin{lemma}\label{lemma reduction 1}
Suppose $V'$ is an $A'[G_m]$-module of Whittaker type. \[\Bil_{P_m}((\Phi^+)^{m-1}\Psi^+(\1_R), \Wh(V',\psi_{A'}^{-1})_R,\chi_{q^{-1/2}X})\simeq \Bil_{P_m}((\Phi^+)^{m-1}\Psi^+(\1_R),(\Phi^+)^{m-1}\Psi^+(\1_R),\chi_{q^{-1/2}X})\]
\[\simeq \Bil_{G_0}(\1_R,\1_R,\chi_{q^{-1/2}X})\simeq R.\] 
\end{lemma}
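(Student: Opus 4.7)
The second and third isomorphisms are formal. Since $G_0$ is trivial, $\Bil_{G_0}(\1_R,\1_R,\chi_{q^{-1/2}X})$ is simply the $R$-module of $R$-bilinear maps $R\times R\to R$, which is naturally $R$. For the second, I would iterate the identities of Proposition~\ref{proposition properties of derivatives} part 2): apply the $\Phi^+$-adjunction $m-1$ times to peel off one $\Phi^+$ from each side at each stage, reducing to $\Bil_{P_1}(\Psi^+(\psi_R),\Psi^+(\psi_R),\chi_{q^{-1/2}X})$, and then apply the $\Psi^+$-adjunction once to land in $\Bil_{G_0}(\1_R,\1_R,\chi_{q^{-1/2}X})$.

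The substantive isomorphism is the first. My plan is to apply the Bernstein-Zelevinsky filtration of Proposition~\ref{proposition properties of derivatives} part 1) to $M:=\Wh(V',\psi_{A'}^{-1})_R$ viewed as an $R[P_m]$-module, producing $0\subset M_m\subset\cdots\subset M_1=M$ with successive quotients $M_i/M_{i+1}\simeq (\Phi^+)^{i-1}\Psi^+(M^{(i)})$. Since $V'$ is of Whittaker type, $M^{(m)}\simeq R$, so $M_m\simeq (\Phi^+)^{m-1}\Psi^+(\psi_R)$ (the $\psi$ vs. $\psi^{-1}$ discrepancy reconciled by the torus-conjugation remark of Section~2). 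The core step is the vanishing
\[\Bil_{P_m}\bigl((\Phi^+)^{m-1}\Psi^+(\psi_R),\ (\Phi^+)^{i-1}\Psi^+(M^{(i)}),\ \chi_{q^{-1/2}X}\bigr)=0 \quad \text{for all } i<m,\]
which I would deduce by applying the $\Phi^+$-adjunction $i-1$ times to reduce to $\Bil_{P_{m-i+1}}((\Phi^+)^{m-i}\Psi^+(\psi_R),\Psi^+(M^{(i)}),\chi_{q^{-1/2}X})$; since $m-i\geq 1$ the left argument lies in the image of $\Phi^+$ and the right in the image of $\Psi^+$, so the third (vanishing) identity of Proposition~\ref{proposition properties of derivatives} part 2) closes the argument.

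Feeding this into the short exact sequences $0\to M_{i+1}\to M_i\to M_i/M_{i+1}\to 0$ and using left-exactness of $\Bil_{P_m}((\Phi^+)^{m-1}\Psi^+(\psi_R),-,\chi_{q^{-1/2}X})$ inductively from $i=m-1$ down to $i=1$ yields that the restriction map to $M_m$ is injective on the source of the first isomorphism. For surjectivity I would exhibit an explicit preimage via the form $I_1(X, W, W')$ introduced just before the lemma: it is well-defined for any $W'\in \Wh(V',\psi_{A'}^{-1})_R$ by Lemma~\ref{lemma support} together with the compact support of $W$ modulo $N_m$, and its restriction to $M_m$ is the standard integral pairing of compactly supported Schwartz functions, which ought to correspond to a unit in $R$ under the adjunction identification of the target with $R$. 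The main obstacle is precisely this last step --- verifying that $I_1|_{M_m}$ is a unit, not merely nonzero --- but it reduces to a bookkeeping comparison between integration against compactly supported functions and the evaluation-at-identity on $G_0=\{1\}$ produced by the iterated adjunctions.
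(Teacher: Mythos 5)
Your argument is correct and is essentially the paper's own proof: the same restriction map, the same Bernstein--Zelevinsky filtration plus adjunction-vanishing argument for injectivity (the swap of $\Phi^+$/$\Psi^+$ slots being harmless since $\Bil(X,Y,\chi)\simeq\Bil(Y,X,\chi)$), and the same use of $I_1(X,\cdot,\cdot)$ for surjectivity. The step you flag as an obstacle is precisely the point the paper disposes of by observing that, under the explicit adjunction isomorphisms, the integral pairing on $(\Phi^+)^{m-1}\Psi^+(\1_R)\times(\Phi^+)^{m-1}\Psi^+(\1_R)$ is the natural basis $\Psi_0(X,\cdot,\cdot)$ of the rank-one target, so $I_1$ restricts to a generator and not merely a nonzero element.
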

\begin{proof}
The last isomorphism is clear, and the one before follows from Proposition \ref{proposition properties of derivatives}. For the first isomorphism, we certainly have a natural $R$-linear map 
\[\mathrm{Res}_0: B\mapsto B_{|(\Phi^+)^{m-1}\Psi^+(\1_R)\times (\Phi^+)^{m-1}\Psi^+(\1_R)}\] from \[\Bil_{P_m}((\Phi^+)^{m-1}\Psi^+(\1_R), \Wh(V',\psi_{A'}^{-1})_R,\chi_{q^{-1/2}X})\] to \[\Bil_{P_m}((\Phi^+)^{m-1}\Psi^+(\1_R) ,(\Phi^+)^{m-1}\Psi^+(\1_R),\chi_{q^{-1/2}X}).\] The map is moreover clearly surjective because the natural basis $\Psi_0(X, \ . \ , \ . \ )$ defined by 
\[\Psi_0(X, W, W')=\int_{N_m\backslash P_m} W(p)  W'(p)  \chi_{q^{-1/2}X^{-1}}(p)dp\] of the $R$-module on the right is the image of $I_1(X, \ . \ , \ . \ )$. Let's prove that it is injective. 
Take an element $B$ of $\Bil_{P_m}((\Phi^+)^{m-1}\Psi^+(\1_R), \Wh(V',\psi_{A'}^{-1})_R,\chi_{q^{-1/2}X})$ which vanishes on 
$(\Phi^+)^{m-1}\Psi^+(\1_R)\times (\Phi^+)^{m-1}\Psi^+(\1_R)$. Consider the Bernstein-Zelevinsky filtration of $M=\Wh(V',\psi_{A'}^{-1})_R$. By commutation of derivatives and tensor product it has bottom piece $(\Phi^+)^{m-1}\Psi^+(\1_R)$ and the other sub-quotients are of the form 
$(\Phi^+)^{i-1}\Psi^+(M^{(i)})$ with $i<m$. But \[\Bil_{P_m}((\Phi^+)^{m-1}\Psi^+(\1_R), (\Phi^+)^{i-1}\Psi^+(M^{(i)}),\chi_{q^{-1/2}X})\]
\[\simeq \Bil_{P_m}((\Phi^+)^{m-1}\Psi^+(\1_R), (\Phi^+)^{i-1}\Psi^+(M^{(i)}),\chi_{q^{-1/2}X})
=\{0\}\] thanks to Proposition \ref{proposition properties of derivatives} again, hence $B=0$ and this finishes the proof.
\end{proof}

For $W\in (\Phi^+)^{m}\Psi^+(\1_R)=\ind_{N_m}^{P_{m+1}}(\psi_R)$ and $W'\in \Wh(V',\psi_{A'}^{-1})_R$ we write 
\[I_2(X, W,W')=\int_{N_m\backslash G_m} W(g)  W'(g) \chi_{q^{1/2}X^{-1}}(g)dg\] 
\[=\int_{P_m\backslash G_m}(\int_{N_m\backslash P_m} W(pg) W'(pg)  \chi_{q^{-1/2}X^{-1}}(p)dp )\chi_{q^{1/2}X^{-1}}(g)dg\]
\[= \int_{P_m\backslash G_m} I_1(X,\rho(g)W,\rho(g)W')\chi_{q^{1/2}X^{-1}}(g)dg .\]
I.e. \begin{equation}\label{equation compatibility psi1 psi2} I_2(X, W,W')=  \int_{P_m\backslash G_m} I_1(X,\rho(g)W,\rho(g)W')
\chi_{q^{1/2}X^{-1}}(g)dg \end{equation}

\begin{lemma}\label{lemme reduction 2}
Let $N$ be an $R[P_m]$-module, then 
\[\Bil_{G_m}(\Phi^+ N , \Wh(V',\psi_{A'}^{-1})_R ,\chi_{q^{1/2}X})=\Bil_{P_m}(N,\Wh(V',\psi_{A'}^{-1})_R ,\chi_{X}),\]
in particular 
\[\Bil_{G_m}((\Phi^+)^{m}\Psi^+(\1_R), \Wh(V',\psi_{A'}^{-1})_R),\chi_{q^{1/2}X})\simeq 
\Bil_{P_m}((\Phi^+)^{m-1}\Psi^+(\1_R), \Wh(V',\psi_{A'}^{-1})_R ,\chi_{X}).\] Moreover the isomorphism from the right to the left sends the basis 
$I_2(X,\ , \ )$ to $I_1(X,\ , \ )$.
\end{lemma}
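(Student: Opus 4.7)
The strategy is to restrict the $P_{m+1}$-action on $\Phi^+ N$ to its subgroup $G_m$, identify the restriction with a normalized compact induction from $P_m$ to $G_m$, and then invoke Frobenius reciprocity for bilinear forms. The ``in particular'' case will be the specialization $N=(\Phi^+)^{m-1}\Psi^+(\1_R)$, and the matching of the bases $I_1\leftrightarrow I_2$ will fall out from the explicit Frobenius formula combined with equation (\ref{equation compatibility psi1 psi2}).

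The geometric heart of the argument is the following claim: as $G_m$-modules (with $G_m$ embedded in $P_{m+1}$ by $g\mapsto\mathrm{diag}(g,1)$), one has $(\Phi^+ N)|_{G_m}\cong \ind_{P_m}^{G_m}(N)$, up to a twist by $\chi_{q^{1/2}}=|\det|^{1/2}$. Indeed, unwinding $\Phi^+ N = \ind_{P_m V}^{P_{m+1}}(N\otimes\psi)$, where $V$ is the abelian unipotent radical of $P_{m+1}$ (the last-column subgroup), the decomposition $P_{m+1}=V\rtimes G_m$ with $V\cap G_m=\{1\}$ and $P_mV\cap G_m=P_m$ shows that any $f\in\Phi^+ N$ is determined by $f|_{G_m}$ via $f(vg)=\psi(v)f(g)$, and $f|_{G_m}$ satisfies the $\ind_{P_m}^{G_m}(N)$-transformation rule. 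The $\chi_{q^{1/2}}$ twist is the combined effect of the $\delta^{1/2}$-normalizations built into both the Bernstein--Zelevinsky derivative $\Phi^+$ and the normalized induction $\ind_{P_m}^{G_m}$.

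Given this identification, the first assertion of the lemma follows from Frobenius reciprocity for bilinear forms. Explicitly, a form $B\in\Bil_{P_m}(N,\Wh(V',\psi_{A'}^{-1})_R,\chi_X)$ corresponds to $\tilde B\in\Bil_{G_m}(\Phi^+ N,\Wh(V',\psi_{A'}^{-1})_R,\chi_{q^{1/2}X})$ via
$$\tilde B(f,W')=\int_{P_m\backslash G_m}B(f(g),gW')\chi_{q^{1/2}X^{-1}}(g)\,dg.$$
The integrand is well-defined on the quotient because the $(P_m,\chi_X)$-equivariance of $B$ cancels against the modulus factor; the integral reduces to a finite sum because $f$ is compactly supported modulo $P_m$; and $(G_m,\chi_{q^{1/2}X})$-equivariance of $\tilde B$ follows by a change of variables. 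The inverse sends $\tilde B$ back to its restriction to sections concentrated at the identity coset, via the standard Frobenius adjunction. Setting $N=(\Phi^+)^{m-1}\Psi^+(\1_R)$ gives the second displayed isomorphism. Finally, substituting $B=I_1(X,\cdot,\cdot)$ into the formula for $\tilde B$ reproduces exactly (\ref{equation compatibility psi1 psi2}), which is the definition of $I_2(X,\cdot,\cdot)$, establishing the correspondence of bases.

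The main obstacle is the careful bookkeeping of the $\delta^{1/2}$-normalizations: one must verify that the normalization of $\Phi^+$ and of $\ind_{P_m}^{G_m}$ (by $(\delta_{G_m}/\delta_{P_m})^{1/2}=|\det|^{1/2}$) combine to yield precisely the single $\chi_{q^{1/2}}$-twist appearing in the statement, and nothing more. Once this is matched against the conventions fixed at the beginning of Section 2, the rest of the argument is essentially formal.
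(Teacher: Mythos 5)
Your proposal is correct and follows essentially the same route as the paper's proof: the paper likewise identifies $(\Phi^+N)|_{G_m}\simeq \ind_{P_m}^{G_m}(N)$ by Mackey theory (citing \cite[Lemma 3.8]{KM17}) and then invokes Frobenius reciprocity for bilinear forms \cite[Proposition 2.29]{BZ76}, whose explicit map $\alpha$ is exactly the integral formula you write down, with the basis matching $I_1\leftrightarrow I_2$ coming from Equation (\ref{equation compatibility psi1 psi2}). The only difference is that you unwind the two citations by hand (and defer the same $\delta^{1/2}$-bookkeeping the references handle), which does not change the argument.
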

\begin{proof}
 By Mackey theory $\Phi^+(N)_{|G_m}\simeq  \ind_{P_m}^{G_m}(N)$ as explained 
in the proof of \cite[Lemma 3.8]{KM17}. Hence 
\[\Bil_{G_m}(\Phi^+(N), \Wh(V',\psi_{A'}^{-1})_R,\chi_{q^{1/2}X})\simeq 
\Bil_{G_m}(\ind_{P_m}^{G_m}(N), \Wh(V',\psi_{A'}^{-1})_R ,\chi_{q^{1/2}X}).\]
By \cite[Proposition 2.29]{BZ76} this latter space is isomorphic to $\Bil_{P_m}(N, \Wh(V',\psi_{A'}^{-1})_R ,\chi_X)$ 
and this concludes the proof of the first assertion. The proof of the second assertion follows from taking $N=(\Phi^+)^{m-1}\Psi^+(\1_R)$.
That of the last assertion follows from the explicit description of $\alpha$ 
in \cite[Proposition 2.29]{BZ76} and Equation (\ref{equation compatibility psi1 psi2}).
\end{proof}

We take a break in our chain of isomorphisms to record the following two key lemmas.
\begin{lemma}\label{finitelengthbilzero}
Let $N$ be an $A[G_l]$-module and $N'$ an $A'[G_l]$-module such that $N$ and $N'$ are admissible and $G_l$-finitely generated. Then $\Bil_{G_l}(N_R,N'_R,\chi_{X})=\{0\}$.
\end{lemma}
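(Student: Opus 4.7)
The plan is to force any bilinear form $B \in \Bil_{G_l}(N_R, N'_R, \chi_X)$ to vanish by exploiting the action of the central element $z = \varpi \cdot I_l \in G_l$.

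First, I would find monic polynomials $P \in A[T]$ and $P' \in A'[T]$ with $P(0) = P'(0) = 1$ such that $P(z_N) = 0$ on $N$ and $P'(z_{N'}) = 0$ on $N'$. Choose a compact open $K \subset G_l$ small enough that the $A[G_l]$-generators of $N$ lie in $N^K$; by admissibility $N^K$ is a finitely generated $A$-module, and $z|_{N^K}$ is an invertible $A$-linear endomorphism. Cayley--Hamilton applied to $z|_{N^K}$ gives a monic $P_1 \in A[T]$ with $P_1(z) = 0$ on $N^K$, and applied to $z^{-1}|_{N^K}$ followed by the reversal $\tilde{P}_1(T) \mapsto T^{\deg \tilde{P}_1} \tilde{P}_1(1/T)$ it produces a polynomial $Q_0 \in A[T]$ with constant term $1$ satisfying $Q_0(z) = 0$ on $N^K$. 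Then $P(T) := T \cdot P_1(T) + Q_0(T)$ is monic of degree $\deg P_1 + 1$ with $P(0) = 1$ and $P(z) = 0$ on $N^K$; centrality of $z$ forces $\ker P(z_N)$ to be $G_l$-stable, so $P(z_N) = 0$ on $A[G_l] \cdot N^K = N$. The construction for $P'$ is identical. Write $P(T) = T^d + a_{d-1} T^{d-1} + \ldots + a_1 T + 1$.

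Next, the equivariance $B(zv, zv') = \chi_X(z) B(v, v') = X^{-l} B(v, v')$ combined with the invertibility of $z_{N'}$ yields the operator identity $B \circ (z_N^j \otimes 1) = X^{-lj} \cdot B \circ (1 \otimes z_{N'}^{-j})$ for every $j \geq 0$. Feeding this into $\sum_{j=0}^d a_j (z_N \otimes 1)^j = 0$ (with $a_0 = a_d = 1$) and precomposing with $1 \otimes z_{N'}^d$, one obtains $B \circ (1 \otimes Q(z_{N'})) = 0$, where
\[
Q(Y) := Y^d + a_1 X^{-l} Y^{d-1} + \ldots + a_{d-1} X^{-l(d-1)} Y + X^{-ld} \;\in\; R[Y]
\]
is monic in $Y$ with unit constant term $X^{-ld}$. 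Trivially $B \circ (1 \otimes P'(z_{N'})) = 0$ as well.

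To finish, it is enough to show $(P'(Y), Q(Y)) = R[Y]$: for then $1 = \lambda(Y) P'(Y) + \mu(Y) Q(Y)$ forces $B = B \circ (1 \otimes 1) = 0$. Equivalently, $\mathrm{Res}_Y(P', Q) \in R^\times$. Both $P'$ and $Q$ being monic in $Y$, the formal identity $\mathrm{Res}_Y(P', Q) = \prod_i Q(\alpha'_i)$ (over the $d'$ roots $\alpha'_i$ of $P'$) shows that, viewed as a Laurent polynomial in $X$, the resultant has its highest $X$-term at $X^0$ with coefficient $\prod_i (\alpha'_i)^d = \bigl((-1)^{d'} P'(0)\bigr)^d = (-1)^{dd'}$, and its lowest $X$-term at $X^{-ldd'}$ with coefficient $1$. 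Multiplying by $g := (-1)^{dd'} X + 1 \in R$ produces a Laurent polynomial whose leading and trailing coefficients are both $1$, hence an element of $S$; therefore $\mathrm{Res}_Y(P', Q) \in R^\times$ and $B = 0$. The principal obstacle is the first step: Cayley--Hamilton alone only produces a monic annihilator, whose constant term is in general not a unit, and without $P(0), P'(0) \in (A \otimes_{W(k)} A')^\times$ the leading Laurent coefficient of $\mathrm{Res}_Y(P', Q)$ cannot be forced into $S$ by multiplying by a unit of $R$. The combination $T P_1 + Q_0$ fixes both ends of $P$ simultaneously, making the subsequent Laurent computation work.
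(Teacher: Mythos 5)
Your proof is correct, and it takes a genuinely different route from the paper's. The paper forms the exterior tensor product $V=N\otimes N'$ over $B=A\otimes_{W(k)}A'$ (Noetherian because $A'$ is finitely generated over $W(k)$), applies a single Cayley--Hamilton/integrality argument to the diagonally embedded central uniformizer acting on $V^K$ to obtain one monic annihilating polynomial $f$ with unit constant term, and then kills any $\phi\in\Hom_{B[G_l\times G_l]}(V,\chi_X)$ at once because $f(X^{-l})$ is a Laurent polynomial with unit leading and trailing coefficients, hence a unit of $R$. You instead build separate annihilators of the central element on each factor, each monic with constant term $1$, transport the relation on $N$ across the bilinear form via the $\chi_X$-equivariance into $B\circ(1\otimes Q(z_{N'}))=0$, and eliminate against $P'$ using the resultant; your extreme-coefficient computation correctly shows $\mathrm{Res}_Y(P',Q)\in R^\times$ (the product formula is legitimate by specialization from the universal monic case, and the resultant lies in the ideal $(P',Q)$). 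What your route buys: since you never tensor the two coefficient rings, you never need $A\otimes_{W(k)}A'$ to be Noetherian, so the hypothesis that $A'$ be finitely generated over $W(k)$ -- which the paper explicitly notes is used only for this lemma -- becomes superfluous; moreover your $T\,P_1(T)+Q_0(T)$ device is exactly the justification needed for the existence of a monic annihilator with unit constant term, a point the paper dispatches in one sentence. What the paper's route buys is brevity: one relation, one evaluation at $X^{-l}$, no resultant. Two small points to tidy in yours: monicity of $P=TP_1+Q_0$ requires $\deg Q_0\le\deg P_1$, which you should arrange (e.g.\ take both Cayley--Hamilton polynomials of degree equal to the number of $A$-module generators of $N^K$, or use $T^mP_1+Q_0$ with $m$ large), and, as in the paper's own proof, the argument implicitly uses $l\ge 1$ so that the two extreme $X$-degrees ($0$ and $-ldd'$) are distinct.
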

\begin{proof}
In this proof, we set $V:=N_R\otimes_R N'_R$. Note that $A\otimes_{W(k)} A'$ is Noetherian  since $A$ is Noetherian, and $A'$ is finitely generated as a $W(k)$-algebra, hence $R$ is Noetherian as well. The module $V$ is admissible and finitely generated as an $R[G_l\times G_l]$-module. We would like to prove that
$\Hom_{R[G_l]}(V, \chi_X) = \{0\}$ for the diagonal action of $G_l$. Choose a compact open subgroup $K$ of $G_l\times G_l$ such that $V$ is generated over $R[G_l\times G_l]$ by $V^K$. The center $Z=F^{\times}$ acts on $V^K$ via the diagonal embedding $Z\hookrightarrow G_l \times G_l$. Let $\varpi$ in $\End_R(V^K)$ denote the action of a uniformizer $\varpi_F$ of $F$. The integer powers of $\varpi$ generate a sub-$R$-algebra $\langle \varpi\rangle$ of $\End_R(V^K)$. By admissibility, $V^K$ is finitely generated as an $R$-module, hence $\End_R(V^K)$ is finitely generated as an $R$-module because $R$ is Noetherian. Again because $R$ is Noetherian the sub-$R$-module $\langle\varpi\rangle$ is also finitely generated over $R$. Hence by Cayley-Hamilton Theorem the endomorphism $\varpi$ satisfies a monic polynomial $f(X)$ in $R[X]$. Moreover, since $\varpi$ acts invertibly on $V^K$, the constant term of $f(X)$ is a unit in $R$. Since we have $f(\varpi)v = 0$ for any $v$ in $V^K$, we also have $f(\varpi)v=0$ for all $v$ in $V$, since $V^K$ generates $V$ and $\varpi$ commutes with the action of $G_l\times G_l$.

Now let $\phi$ be any element of $\Hom_{R[G_l]}(V,\chi_X)$. Then $$0 = \phi(f(\varpi)v) = f(\chi_X(\varpi_F))\phi(v) = f(X^{-l})\phi(v).$$ Since $f(X^l)$ is a monic polynomial with unit constant term, $f(X^{-l})$ is invertible in the ring $R$. We conclude that $\phi(v)=0$.
\end{proof}

\begin{lemma}\label{lemma preparation to reduction 3}
Let $N$ be an admissible, finitely generated $A[G_l]$-module for $1\leq l \leq m$, then 
\[\Bil_{G_m}({(\Phi^+)}^{m-l}\Psi^+(N_R), \Wh(V',\psi_{A'}^{-1})_R,\chi_{q^{1/2}X})=\{0\}.\]
\end{lemma}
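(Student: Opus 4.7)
The strategy is to combine Lemma \ref{lemme reduction 2} with the Bernstein--Zelevinsky filtration of $\Wh(V',\psi_{A'}^{-1})_R$ and the bilinear-form identities of Proposition \ref{proposition properties of derivatives}(2), reducing the assertion to the vanishing result of Lemma \ref{finitelengthbilzero}.

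First assume $l<m$. Applying Lemma \ref{lemme reduction 2} with its ``$N$'' taken to be $(\Phi^+)^{m-l-1}\Psi^+(N_R)$ gives
\[
\Bil_{G_m}\bigl((\Phi^+)^{m-l}\Psi^+(N_R),\Wh(V',\psi_{A'}^{-1})_R,\chi_{q^{1/2}X}\bigr)\;\simeq\;\Bil_{P_m}\bigl((\Phi^+)^{m-l-1}\Psi^+(N_R),\Wh(V',\psi_{A'}^{-1})_R,\chi_X\bigr).
\]
Set $M=\Wh(V',\psi_{A'}^{-1})_R$. By Proposition \ref{proposition properties of derivatives}(1), $M$ admits a Bernstein--Zelevinsky filtration $0=M_{m+1}\subset M_m\subset \cdots\subset M_1=M$ with successive quotients $M_i/M_{i+1}\simeq(\Phi^+)^{i-1}\Psi^+(M^{(i)})$. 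Because $\Bil_{P_m}(-,-,\chi_X)$ is left exact in the second argument, a descending induction along this filtration reduces the claim to the vanishing, for each $1\leq i\leq m$, of
\[
\Bil_{P_m}\bigl((\Phi^+)^{m-l-1}\Psi^+(N_R),\,(\Phi^+)^{i-1}\Psi^+(M^{(i)}),\,\chi_X\bigr).
\]

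The remaining analysis is a case-by-case application of Proposition \ref{proposition properties of derivatives}(2). Using the isomorphism $\Bil_{P_{n+1}}(\Phi^+M_1,\Phi^+M_2,\chi)\simeq \Bil_{P_n}(M_1,M_2,\chi)$ iteratively, I strip $\min(m-l-1,i-1)$ common outer $\Phi^+$'s from the two arguments. If $i\neq m-l$, exactly one side retains an outer $\Phi^+$ while the other is just a $\Psi^+$; by the third isomorphism of Proposition \ref{proposition properties of derivatives}(2), together with the tautological symmetry $\Bil_G(V,V',\chi)\simeq \Bil_G(V',V,\chi)$, the remaining space is zero. If $i=m-l$, the cancellation is complete, and the $\Psi^+$-adjunction gives
\[
\Bil_{P_{l+1}}\bigl(\Psi^+(N_R),\Psi^+(M^{(m-l)}),\chi_X\bigr)\;\simeq\;\Bil_{G_l}\bigl(N_R,M^{(m-l)},\chi_X\bigr).
\]
By Proposition \ref{proposition properties of derivatives}(3), $M^{(m-l)}=(\Wh(V',\psi_{A'}^{-1})^{(m-l)})_R$ is the scalar extension to $R$ of an admissible and finitely generated $A'[G_l]$-module (admissibility and finite generation are preserved by $\Phi^-$ and $\Psi^-$), so Lemma \ref{finitelengthbilzero} applies and delivers the vanishing.

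For the boundary case $l=m$ there is no $\Phi^+$ to strip: restricted along $G_m\hookrightarrow P_{m+1}$, the normalized $\Psi^+(N_R)$ coincides with $N_R$ twisted by $\delta_{P_{m+1}}^{1/2}|_{G_m}$, and a direct computation shows that this unramified twist combined with the $q^{1/2}$ factor appearing in $\chi_{q^{1/2}X}$ yields $\chi_X$. Hence
\[
\Bil_{G_m}\bigl(\Psi^+(N_R),\Wh(V',\psi_{A'}^{-1})_R,\chi_{q^{1/2}X}\bigr)\;\simeq\;\Bil_{G_m}\bigl(N_R,\Wh(V',\psi_{A'}^{-1})_R,\chi_X\bigr),
\]
which vanishes directly by Lemma \ref{finitelengthbilzero}.

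\textbf{Main obstacle.} The delicate point is the bookkeeping in the case analysis: through each iterated use of Proposition \ref{proposition properties of derivatives}(2) one must track which group ($G_k$ vs.\ $P_k$) the bilinear form lives on, which character it carries, and which of $\Psi^+$ or $\Phi^+$ is the outermost functor on each side. Conceptually the heart of the argument is that $\Phi^+/\Psi^+$ orthogonality kills every subquotient of the Bernstein--Zelevinsky filtration of $\Wh(V',\psi_{A'}^{-1})_R$ except the unique one whose derivative level matches $m-l$, at which point Lemma \ref{finitelengthbilzero} closes the argument.
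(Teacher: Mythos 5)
Your proposal is correct and follows essentially the same route as the paper: reduce via Lemma \ref{lemme reduction 2} to a $P_m$-bilinear form against $\Wh(V',\psi_{A'}^{-1})_R$, run the Bernstein--Zelevinsky filtration (using that derivatives commute with $-\otimes_A R$), kill every subquotient with mismatched derivative level by the $\Phi^+/\Psi^+$ adjunction and orthogonality, and finish the matching level $i=m-l$ (and the boundary case $l=m$) with Lemma \ref{finitelengthbilzero}. Your extra bookkeeping (the symmetry swap for orthogonality and the normalization computation turning $\chi_{q^{1/2}X}$ into $\chi_X$ when $l=m$) only makes explicit what the paper leaves implicit.
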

\begin{proof}
If $l=m$, then 
\[\Bil_{G_m}(\Psi^+(N_R), \Wh(V',\psi_{A'}^{-1})_R,\chi_{q^{1/2}X})=
\Bil_{G_m}(N_R, \Wh(V',\psi_{A'}^{-1})_R,\chi_{X})\] by definition of $\Psi^+$, which is equal to zero by Lemma~\ref{finitelengthbilzero}.

If $l<m$, then \[\Bil_{G_m}({(\Phi^+)}^{m-l}\Psi^+(N_R), \Wh(V',\psi_{A'}^{-1})_R,\chi_{q^{1/2}X})\]
\[\simeq 
\Bil_{P_m}({(\Phi^+)}^{m-l-1}\Psi^+(N_R),  \Wh(V',\psi_{A'}^{-1})_R,\chi_{X}) \] 
according to Lemma \ref{lemme reduction 2}. Now thanks to the Bernstein-Zelevinsky filtration, it is sufficient to prove (as the  Bernstein-Zelevinsky functors commute with 
tensor product) that 
\[\Bil_{P_m}({(\Phi^+)}^{m-l-1}\Psi^+(N_R), {(\Phi^+)}^{m-l'-1}\Psi^+(N'_R),
\chi_{X}\otimes \psi)=\{0\}\] when $N'$ is an admissible finite type $A'[G_{l'}]$-module for $0\leq l' \leq m$. However 
if $l\neq l'$ (in particular when $l'=0$ this is automatic) this follows at once from the adjunction properties of the 
Bernstein-Zelevinsky functors. If $l=l'$ these properties tell us that 
\[\Bil_{P_m}({(\Phi^+)}^{m-l-1}\Psi^+(N_R), {(\Phi^+)}^{m-l-1}\Psi^+(N'_R),
\chi_{X})\simeq \Bil_{G_l}(N_R, N'_R,
\chi_{X}).\] This latter space is zero by Lemma~\ref{finitelengthbilzero}.
\end{proof}

Here is the final step. Recall that $V$ is an $A[G_n]$-module whereas $V'$ is an $A'[G_m]$-module. For $W\in \Wh(V,\psi_A)_R$ and $W'\in \Wh(V',\psi_{A'}^{-1})_R$ we set 
\[I(X,W,W')=\sum_{l\in \Z} (\int_{N_m\backslash G_m^{(l)}} W(g) W'(g)dg) q^{l(n-m)/2} X^l.\] By \cite{Moss16.1} again, one has $I(X,W,W')\in R$

\begin{lemma}\label{lemma reduction 3}
The restriction map $B\mapsto B|_{(\Phi^+)^{m}\Psi^+(\textbf{1}_R)\times \Wh(V',\psi_{A'}^{-1})_R}$ gives an isomorphism between the following two $R$-modules:
\[\Bil_{G_m}((\Phi^-)^{n-m-1}\Wh(V,\psi_A)_R, \Wh(V',\psi_{A'}^{-1})_R,\chi_{q^{1/2}X})\]
\[\simeq \Bil_{G_m}((\Phi^+)^{m}\Psi^+(\1_R), \Wh(V',\psi_{A'}^{-1})_R,\chi_{q^{1/2}X}).\]
Moreover 
\[\Bil_{G_m}((\Phi^-)^{n-m-1}\Wh(V,\psi_A)_R, \Wh(V',\psi_{A'}^{-1})_R,\chi_{q^{1/2}X})\] is canonically isomorphic to 
\[\Bil_{G_m U_{m+1,n-m-1}}(\Wh(V,\psi_A)_R, \Wh(V',\psi_{A'}^{-1})_R,\chi_{q^{(n-m)/2}X}\otimes \psi)\] as an $R$-module. 
\end{lemma}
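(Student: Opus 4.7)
The plan has two parts, matching the two claimed isomorphisms. The first will follow from the Bernstein--Zelevinsky filtration combined with Lemma~\ref{lemma preparation to reduction 3}, while the second is formal via the adjunction defining the BZ functors.

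For the first isomorphism, set $M := (\Phi^-)^{n-m-1}\Wh(V,\psi_A)_R$, regarded as a $P_{m+1}$-module. Because $\Phi^-$ shifts derivatives by one, $M^{(i)} \cong \Wh(V,\psi_A)_R^{(n-m-1+i)}$; in particular $M^{(m+1)} \cong \Wh(V,\psi_A)_R^{(n)} \cong R$. The Bernstein--Zelevinsky filtration of $M$ from Proposition~\ref{proposition properties of derivatives}(1) therefore has bottom piece $M_{m+1} \cong (\Phi^+)^{m}\Psi^+(\mathbf{1}_R)$ and successive quotients $M_i/M_{i+1} \cong (\Phi^+)^{i-1}\Psi^+(\Wh(V,\psi_A)^{(n-m-1+i)}_R)$ for $i = 1,\dots,m$. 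Setting $l = m+1-i \in \{1,\dots,m\}$ and $N = \Wh(V,\psi_A)^{(n-l)}$ (admissible and finitely generated over $A[G_l]$, since the BZ functors preserve these properties), each quotient takes the form $(\Phi^+)^{m-l}\Psi^+(N_R)$, so Lemma~\ref{lemma preparation to reduction 3} yields $\Bil_{G_m}(M_i/M_{i+1},\Wh(V',\psi_{A'}^{-1})_R,\chi_{q^{1/2}X}) = 0$. Left-exactness of $\Bil_{G_m}(-,\Wh(V',\psi_{A'}^{-1})_R,\chi_{q^{1/2}X})$ in its first argument, applied inductively along the filtration, then gives the injectivity of the restriction map from the LHS onto $\Bil_{G_m}(M_{m+1},\Wh(V',\psi_{A'}^{-1})_R,\chi_{q^{1/2}X})$, which by Lemma~\ref{lemme reduction 2} is identified with the RHS. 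For surjectivity I would use that the RHS is free of rank one over $R$, generated via Lemmas~\ref{lemma reduction 1} and~\ref{lemme reduction 2} by $I_2(X,\cdot,\cdot)$: the Rankin--Selberg integral $I(X,W,W')$, convergent in $R$ by Lemma~\ref{lemma support} and~\cite{Moss16.1}, produces an element of the LHS whose restriction to the bottom piece of the filtration recovers $I_2$.

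For the second, canonical isomorphism, each application of $\Phi^-$ is, up to a normalization by the modulus, the twisted coinvariant functor with respect to the last-row unipotent against the character $\psi$. Iterating $n-m-1$ times realizes $(\Phi^-)^{n-m-1}\Wh(V,\psi_A)_R$ as the $(U_{m+1,n-m-1},\psi)$-coinvariants of $\Wh(V,\psi_A)_R$. By the universal property of twisted coinvariants, $G_m$-equivariant bilinear forms transforming by $\chi_{q^{1/2}X}$ on these coinvariants correspond bijectively to $G_m U_{m+1,n-m-1}$-equivariant bilinear forms on $\Wh(V,\psi_A)_R$ itself, the character being shifted from $\chi_{q^{1/2}X}$ to $\chi_{q^{(n-m)/2}X}\otimes \psi$ by the modulus contributions accumulated through the $n-m-1$ iterations of $\Phi^-$.

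I expect the main obstacle to lie in the surjectivity step of the first part, since it requires both convergence of $I(X,W,W')$ in $R$ and the verification that $I$ descends to the $(U_{m+1,n-m-1},\psi)$-coinvariants of $\Wh(V,\psi_A)_R$. Fortunately this descent is exactly the content of the second, canonical isomorphism, so once the coinvariant interpretation of $(\Phi^-)^{n-m-1}$ is in place the Rankin--Selberg integral automatically provides the preimage of $I_2$ under restriction, closing the argument.
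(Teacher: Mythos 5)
Your proposal is correct and takes essentially the same route as the paper: the canonical isomorphism comes from the coinvariant description of $(\Phi^-)^{n-m-1}$, injectivity from the Bernstein--Zelevinsky filtration together with the vanishing in Lemma~\ref{lemma preparation to reduction 3}, and surjectivity from exhibiting $I(X,\cdot,\cdot)$ as an element of the source whose restriction is the generator $I_2(X,\cdot,\cdot)$. The only (cosmetic) difference is that you filter $(\Phi^-)^{n-m-1}\Wh(V,\psi_A)_R$ directly as a $P_{m+1}$-module, whereas the paper applies $(\Phi^-)^{n-m-1}$ to the filtration of $\Wh(V,\psi_A)_R$ and discards the pieces it kills; the resulting subquotients, and hence the argument, are the same.
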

\begin{proof}
First note that $I(X,\ ,\ )$ belongs to 
\[\Bil_{G_m U_{m+1,n-m-1}}(\Wh(V,\psi_A)_R, \Wh(V',\psi_{A'}^{-1})_R,\chi_{q^{(n-m)/2}X}\otimes \psi).\] 
By definition of $(\Phi^-)^{n-m-1}$ the space 
\[\Bil_{G_m U_{m+1,n-m-1}}(\Wh(V,\psi_A)_R, \Wh(V',\psi_{A'}^{-1})_R,\chi_{q^{(n-m)/2}X}\otimes \psi)\] is canonically isomorphic to 
\[\Bil_{G_m}((\Phi^-)^{n-m-1} \Wh(V,\psi_A)_R, \Wh(V',\psi_{A'}^{-1})_R,\chi_{q^{1/2}X}).\] In particular we see $I(X,\ ,\ )$ as an element of this $R$-module. 
 
Now we prove that the restriction map of the statement has to be injective. Set $M:= \Wh(V,\psi_A)_R$. By the Bernstein-Zelevinsky filtration it is sufficient to prove that 
\[\Bil_{G_m}((\Phi^-)^{n-m-1}{(\Phi^+)}^{i}\Psi^+(M^{(i+1)}), \Wh(V',\psi_{A'}^{-1})_R,\chi_{q^{1/2}X})=\{0\}\] when 
$i=0,\dots,n-2$. However for $i < n-m-1$  module  $(\Phi^-)^{n-m-1}{(\Phi^+)}^{i}\Psi^+(M^{(i+1)})$ is equal to zero by the properties of the Bernstein-Zelevinsky functors, and 
for $n+1-m\leq i\leq n-2$ it is equal to ${(\Phi^+)}^{i+m+1-n}\Psi^+(M^{(i+1)})$. 
Because derivatives commute with tensor products, each $M^{(i+1)}$ is of the form $N_R$ for $N$ an admissible finite-type $A[G_{n-i-1}]$-module. But we know that
\[\Bil_{G_m}({(\Phi^+)}^{i+m+1-n}\Psi^+(N_R), \Wh(V',\psi_{A'}^{-1})_R,\chi_{q^{1/2}X})=\{0\}\] thanks to Lemma \ref{lemma preparation to reduction 3}. 

Finally the restriction of bilinear forms map is surjective as the restriction of the bilinear form $I(X,\ , \ )$ is precisely $I_2(X,\ ,\ )$, which is a basis of the target space.
\end{proof}

Together Lemmas \ref{lemma reduction 1}, \ref{lemme reduction 2}, \ref{lemma reduction 3} imply:

\begin{thm}\label{theorem multiplicity 1}
Let $A$ and $A'$ be Noetherian $W(k)$-algebras, and suppose that $A'$ is finitely generated over $W(k)$. Let $m<n$ be two positive integers, let $V$ be an $A[G_n]$-module of Whittaker type, and let $V'$ be a $A'[G_m]$-module of Whittaker type. 
Then the $R$-module \[\Bil_{G_m U_{m+1,n-m-1}}(\Wh(V,\psi_A)_R, \Wh(V',\psi_{A'}^{-1})_R,\chi_{q^{(n-m)/2}X}\otimes \psi_R)\] is a free $R$-module of rank one. 
\end{thm}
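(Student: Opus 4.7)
The plan is to simply compose the chain of isomorphisms supplied by the three reduction lemmas, which have been arranged precisely for this purpose. Unwinding the definition, I want to show that
\[D(\Wh(V,\psi_A),\Wh(V',\psi_{A'}^{-1}))=\Bil_{G_m U_{m+1,n-m-1}}(\Wh(V,\psi_A)_R,\Wh(V',\psi_{A'}^{-1})_R,\chi_{q^{(n-m)/2}X}\otimes\psi_R)\]
is free of rank one over $R$. Lemma \ref{lemma reduction 3} first gives a canonical (adjunction) identification of this space with $\Bil_{G_m}((\Phi^-)^{n-m-1}\Wh(V,\psi_A)_R,\Wh(V',\psi_{A'}^{-1})_R,\chi_{q^{1/2}X})$, and then, via restriction to the bottom piece $(\Phi^+)^m\Psi^+(\1_R)$ of the Bernstein--Zelevinsky filtration of $(\Phi^-)^{n-m-1}\Wh(V,\psi_A)_R$, an isomorphism onto $\Bil_{G_m}((\Phi^+)^m\Psi^+(\1_R),\Wh(V',\psi_{A'}^{-1})_R,\chi_{q^{1/2}X})$, carrying the distinguished integral form $I(X,\cdot,\cdot)$ to $I_2(X,\cdot,\cdot)$.

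Next I apply Lemma \ref{lemme reduction 2} to pass from $G_m$-bilinear forms to $P_m$-bilinear forms and peel off one layer of $\Phi^+$, producing an isomorphism onto $\Bil_{P_m}((\Phi^+)^{m-1}\Psi^+(\1_R),\Wh(V',\psi_{A'}^{-1})_R,\chi_X)$ sending $I_2$ to $I_1$. Finally Lemma \ref{lemma reduction 1} gives isomorphisms
\[\Bil_{P_m}((\Phi^+)^{m-1}\Psi^+(\1_R),\Wh(V',\psi_{A'}^{-1})_R,\chi_{q^{-1/2}X})\simeq\Bil_{G_0}(\1_R,\1_R,\chi_{q^{-1/2}X})\simeq R,\]
the last being tautologically free of rank one. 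Composing the whole chain yields an $R$-linear isomorphism $D(\Wh(V,\psi_A),\Wh(V',\psi_{A'}^{-1}))\simeq R$ with explicit generator $I(X,\cdot,\cdot)$, which is exactly the assertion.

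The main difficulty in this combination is really absorbed into the three preparatory lemmas: once Lemma \ref{lemma reduction 3} is available, the other two are bookkeeping. What makes Lemma \ref{lemma reduction 3} go through is the vanishing result Lemma \ref{lemma preparation to reduction 3} (itself powered by Lemma \ref{finitelengthbilzero} and the Cayley--Hamilton argument on the action of a uniformizer of $F$), which ensures that every subquotient of the Bernstein--Zelevinsky filtration of $\Wh(V,\psi_A)_R$ other than the bottom Schwartz piece contributes the zero bilinear form; this is precisely what forces the restriction-of-bilinear-forms map to be injective, while surjectivity is witnessed by the fact that $I(X,\cdot,\cdot)$ restricts to the generator $I_2(X,\cdot,\cdot)$. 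Granting those inputs, there is no further obstacle.
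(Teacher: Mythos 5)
Your proposal is correct and is essentially identical to the paper's own argument: the paper proves Theorem \ref{theorem multiplicity 1} precisely by composing Lemmas \ref{lemma reduction 1}, \ref{lemme reduction 2} and \ref{lemma reduction 3}, with the generator $I(X,\cdot,\cdot)$ tracked through to $I_2$ and then $I_1$, and with the injectivity in Lemma \ref{lemma reduction 3} resting on Lemma \ref{lemma preparation to reduction 3} and Lemma \ref{finitelengthbilzero} exactly as you describe. Nothing further is needed.
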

{Note that the finite generation of $A'$ is only needed insofar as it guarantees $A\otimes_{W(k)}A'$ is Noetherian, for the purpose of Lemma~\ref{finitelengthbilzero}.}

\subsection{Functional Equation}

Again let $A'$ be a finitely generated commutative $W(k)$-algebra. We now introduce some more notation. We denote by $\M_{a,b}$ the space of $a\times b$ matrices with coefficients in $F$. 
Let $V$ and $V'$ be $A[G_n]$- and $A'[G_m]$-modules of Whittaker type, with $n>m$. For $0\leq j \leq n-m-1$, $W\in \Wh(V,\psi_A)$ and $W'\in \Wh(V',\psi_{A'}^{-1})$ we set 
\[I(X,W,W';j)=\sum_{l\in \Z}c_l(W,W';j) X^l,\] 
 where \[c_l(W,W';j)= \int_{\M_{j,m}} \int_{N_m\backslash G_m^{(l)}}W\begin{pmatrix} g & & \\ x & I_j & \\ 
 & & I_{n-m-j} \end{pmatrix} \otimes W'(g) dg dx.\] 
 Note that $I(X,W,W')=I(X,W,W';0)$.

We set \[w_{t,r}=\diag(I_t,w_r)\in G_{t+r}.\]
It is explained in \cite[(2.11), p. 395]{JPS2} that the integrals $I(q^{-1}X^{-1}, \rho(w_{m,n-m})\widetilde{W}, \widetilde{W}';n-m-1)$ and $ I(X, W, W';0)$ in Corollary \ref{corollary functional equation} below all belong to $D(\Wh(V,\psi_A),\Wh(V',\psi_{A'}^{-1}))$, this is just a simple change of variable. It is then explained in \cite[(2.8), p. 395]{JPS2} which crucially relies on \cite[Theorem (4.5)]{JPS1}, that the gamma factor given in the functional equation below does not depends on $j$, the proof of the functional equation being reduced to the case $j=0$. This latter fact is not trivial at all (in particular they need to introduce auxiliary integrals and do elementary Fourier analysis on finite dimensional $F$-vector spaces, which is of course fine in our context as such a vector space is an Abelian pro-$p$-group) but follows verbatim from the aforementioned arguments. This being observed, a corollary of Theorem \ref{theorem multiplicity 1} is the functional equation of local Rankin-Selberg integrals:

\begin{corollary}\label{corollary functional equation}
Let $V$ be an $A[G_n]$-module of Whittaker type, let $V'$ be an $A'[G_m]$-module of Whittaker type with $n>m$, and let $0\leq j \leq n-m-1$. Then there is a unique $\gamma(X,V,V',\psi)\in R^{\times}$ such that for all $W\in \Wh(V,\psi_A)$ and $W'\in \Wh(V',\psi_{A'}^{-1})$, one has:
\begin{equation}\label{functionaleqn}
I(q^{-1}X^{-1}, \rho(w_{m,n-m})\widetilde{W}, \widetilde{W}';n-m-1-j)=\omega_{V'}(-1)^{m-1}\gamma(X,V,V',\psi) I(X, W, W';j),
\end{equation}
where $\omega_{V'}$ is the central character of $\Wh(V',\psi_{A'})$ (c.f. Prop~\ref{prop:whittakertypeschur}).
\end{corollary}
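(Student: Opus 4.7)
The plan is to invoke Theorem~\ref{theorem multiplicity 1}: I will show that both sides of (\ref{functionaleqn}), viewed as bilinear forms on $\Wh(V,\psi_A)\times\Wh(V',\psi_{A'}^{-1})$, belong to the rank-one $R$-module $D(\Wh(V,\psi_A),\Wh(V',\psi_{A'}^{-1}))$. Their ratio is then automatically a scalar in $R$, and after factoring out $\omega_{V'}(-1)^{m-1}$ this defines $\gamma(X,V,V',\psi)$; the final task is to verify it is a unit.

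First I would check that for each $0\leq j\leq n-m-1$ the map $(W,W')\mapsto I(X,W,W';j)$ belongs to $D(\Wh(V,\psi_A),\Wh(V',\psi_{A'}^{-1}))$. The $G_m$-equivariance follows from a change of variables $g\mapsto gh$ in the integral over $N_m\backslash G_m^{(l)}$: the valuation shift $l\mapsto l+\val(\det h)$ and the $q^{l(n-m)/2}X^l$-weight combine to reproduce $\chi_{q^{(n-m)/2}X}(h)$. The $U_{m+1,n-m-1}$-equivariance follows by writing the product of the embedded matrix and the $U_{m+1,n-m-1}$-element as a strictly upper unipotent factor times the original embedded matrix; the unipotent factor contributes $\psi_A$ via the Whittaker law of $W$, and—for $j>0$—the extra off-diagonal contribution is absorbed by an affine change of variable in the $\M_{j,m}$-integration. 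Convergence and membership in $R$ reduce to Lemma~\ref{lemma support} exactly as for $I(X,W,W')$; the resulting bilinear form is nonzero, hence a basis of the rank-one $R$-module.

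The second—and hardest—step is showing the LHS also lies in $D(\Wh(V,\psi_A),\Wh(V',\psi_{A'}^{-1}))$. The operation $W\mapsto\widetilde W$ with $\widetilde W(g)=W(w_n\,{}^t g^{-1})$ identifies $\Wh(V,\psi_A)$ with $\Wh(\widetilde V,\psi_A^{-1})$, and likewise on the $V'$-side. Computing the block form of $w_n\,{}^t(h w_{m,n-m})^{-1}$ for $h$ of the shape appearing inside $I(\cdot,\cdot,\cdot;n-m-1-j)$, and performing the change of variables ${}^tg^{-1}\mapsto g$, ${}^t x\mapsto x$ inside the integral, one sees that the substitutions $X\mapsto q^{-1}X^{-1}$ and $j\mapsto n-m-1-j$ precisely invert the characters computed in the first step, so that the LHS transforms under $G_m U_{m+1,n-m-1}$ by the same $\chi_{q^{(n-m)/2}X}\otimes\psi_R$. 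The sign $\omega_{V'}(-1)^{m-1}$ emerges from pulling $w_m$ through the center of $V'$ via Proposition~\ref{prop:whittakertypeschur}.

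Both sides being in a free rank-one module now forces (\ref{functionaleqn}) to hold with a unique scalar $\gamma(X,V,V',\psi)\in R$. To obtain $\gamma\in R^\times$ I would apply (\ref{functionaleqn}) a second time to $(\widetilde V,\widetilde V')$, with the roles of $W,W',j$ played by $\rho(w_{m,n-m})\widetilde W,\widetilde W',n-m-1-j$; composing the two identities and using that $\widetilde{\widetilde W}$ differs from $W$ only by the action of the center of $V$ (again via Proposition~\ref{prop:whittakertypeschur}) yields $\gamma\cdot\gamma'=c\in R^\times$, whence $\gamma$ is invertible in $R$. The main obstacle is the block-matrix bookkeeping of the third paragraph, where one must check that $j\mapsto n-m-1-j$ and $X\mapsto q^{-1}X^{-1}$ are exactly the adjustments needed to preserve the equivariance character.
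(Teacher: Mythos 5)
Your outline follows the same route the paper intends (the paper treats this as an immediate consequence of Theorem~\ref{theorem multiplicity 1}), but there is a genuine gap at the pivotal step. Over the ring $R=S^{-1}\bigl((A\otimes_{W(k)}A')[X^{\pm 1}]\bigr)$, which is in general neither a field nor even a domain, a nonzero element of a free rank-one module need \emph{not} be a basis, so your inference ``the resulting bilinear form is nonzero, hence a basis'' is invalid; likewise ``both sides being in a free rank-one module now forces (\ref{functionaleqn}) to hold with a unique scalar'' fails: two elements of a free rank-one module need not be proportional (think of $2$ and $3$ in $\Z$). What actually makes the corollary immediate in the paper is that the chain of restriction isomorphisms in Lemmas~\ref{lemma reduction 1}, \ref{lemme reduction 2}, \ref{lemma reduction 3} carries the specific form $I(X,\cdot,\cdot)=I(X,\cdot,\cdot;0)$ to $I_2$, then to $I_1$, then to the canonical generator of $\Bil_{G_0}(\1_R,\1_R,\chi_{q^{-1/2}X})\simeq R$; i.e.\ it is proved, not merely that $D$ has rank one, but that $I(X,\cdot,\cdot;0)$ is a \emph{generator}. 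To run your argument for a fixed $j$ you must establish the analogous generator property for $I(X,\cdot,\cdot;j)$ (and, to get $\gamma\in R^{\times}$, for the left-hand form as well), e.g.\ by relating the $j$-integral to the $j=0$ integral through the same restriction maps, or by exhibiting $W,W'$ (Schwartz-type Kirillov functions on the $V$-side) for which $I(X,W,W';j)$ equals $1$, as is done in \cite{JPS2} and \cite{KM17}; your proposal never addresses this, and without it neither existence nor uniqueness of $\gamma$, nor its invertibility, follows.

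The same issue undermines your concluding duality step: applying the functional equation twice only yields $\gamma\cdot\gamma'=c\in R^{\times}$ after you know that $I(X,\cdot,\cdot;j)$ has trivial annihilator in $R$, which again comes from the generator property, and the second application presupposes that the functional equation has already been established for $(\widetilde V,\widetilde V')$ with the complementary index --- so it cannot substitute for the missing generator argument. The equivariance computations in your second and third paragraphs (the change of variables giving $\chi_{q^{(n-m)/2}X}\otimes\psi_R$-invariance of both forms, the role of $w_{m,n-m}$, the sign $\omega_{V'}(-1)^{m-1}$ via Proposition~\ref{prop:whittakertypeschur}) are the right supporting steps and are fine in outline; the proof stands or falls on replacing ``nonzero, hence a basis'' by an actual identification of the $j$-forms with unit multiples of the basis produced by Lemmas~\ref{lemma reduction 1}--\ref{lemma reduction 3}.
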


If $f:A\to B$ and $f':A'\to B'$ are ring homomorphisms, and $V$, $V'$ are of Whittaker type, then $V\otimes_AB$ and $V'\otimes_{A'}B'$ are also of Whittaker type, by Proposition~\ref{proposition properties of derivatives}. Let 
$$R' = \tilde{S}^{-1}(B\otimes_{W(k)} B')[X^{\pm 1}],$$ where $\tilde{S}$ is the subset of $(B\otimes B')[X^{\pm1}]$ with leading and trailing coefficients 1. Denote by $f\otimes f'$ either the map $A\otimes A'\to B\otimes B'$ or the induced map $R\to R'$, depending on the context. The factor $\gamma(X,V,V',\psi)\in R^{\times}$ is compatible with extension of scalars in the following sense.
\begin{corollary}\label{corollary specialization}
$\gamma(X,V\otimes_AB,V'\otimes_{A'}B',\psi) = (f\otimes f')(\gamma(X,V,V',\psi))$ in $(R')^{\times}$.
\end{corollary}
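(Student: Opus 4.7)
The plan is to invoke the uniqueness assertion in Corollary~\ref{corollary functional equation}: I show that $(f\otimes f')(\gamma(X,V,V',\psi))$ satisfies the characterizing functional equation for the pair $(V\otimes_A B, V'\otimes_{A'} B')$, and conclude by uniqueness. First, note that $f\otimes f'$ extends to a ring homomorphism $R\to R'$, since a unital ring map preserves the condition that a Laurent polynomial has leading and trailing coefficients equal to $1$; in particular $(f\otimes f')(\gamma(X,V,V',\psi))$ lies in $(R')^\times$, which makes sense of the claimed equality.

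Next I would verify that the formation of Whittaker spaces and of the local integrals is compatible with base change. By Proposition~\ref{proposition properties of derivatives}~(3), $(V\otimes_A B)^{(n)} \simeq V^{(n)}\otimes_A B \simeq B$, so $V\otimes_A B$ is of Whittaker type, and a chosen isomorphism $V^{(n)}\simeq A$ induces one for $(V\otimes_A B)^{(n)}\simeq B$. For $W\in\Wh(V,\psi_A)$, one obtains a natural base-change $W^f\in \Wh(V\otimes_A B,\psi_B)$ defined by $W^f(g) = f(W(g))$; the $W^f$ generate $\Wh(V\otimes_A B,\psi_B)$ as a $B$-module because $V\otimes 1$ generates $V\otimes_A B$ over $B$. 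Each coefficient $c_l(W,W';j)$ of the local integral is a finite sum of values of $W\otimes W'$ on coset representatives (by Lemma~\ref{lemma support} together with smoothness), hence $I(X,W^f,W'^{f'};j) = (f\otimes f')\bigl(I(X,W,W';j)\bigr)$ in $R'$. The operations $W\mapsto \widetilde{W}$ and $W\mapsto \rho(w_{m,n-m})W$ are purely group-theoretic and also commute with base change. Finally, a central element $z$ of $G_m$ acts on $\Wh(V',\psi_{A'}^{-1})$ by the scalar $\omega_{V'}(z)\in A'$ (Proposition~\ref{prop:whittakertypeschur}); after tensoring with $B'$ it acts by $f'(\omega_{V'}(z))$, so $\omega_{V'\otimes_{A'} B'} = f'\circ\omega_{V'}$.

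Applying $f\otimes f'\colon R\to R'$ to the functional equation (\ref{functionaleqn}) for $(V,V')$ then yields, for all $W\in\Wh(V,\psi_A)$ and $W'\in\Wh(V',\psi_{A'}^{-1})$, the functional equation for $(V\otimes_A B, V'\otimes_{A'} B')$ evaluated at $(W^f,W'^{f'})$, with gamma factor replaced by $(f\otimes f')(\gamma(X,V,V',\psi))$. By $R'$-bilinearity together with the generation statement above, it extends to all elements of $\Wh(V\otimes_A B,\psi_B)$ and $\Wh(V'\otimes_{A'} B', \psi_{B'}^{-1})$. The uniqueness assertion in Corollary~\ref{corollary functional equation} then forces the claimed equality in $(R')^\times$.

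The only mild obstacle is the bookkeeping for Whittaker spaces under base change: one needs the canonical map $\Wh(V,\psi_A)\otimes_A B \to \Wh(V\otimes_A B,\psi_B)$ to have image generating the target as a $B$-module. This follows from Proposition~\ref{proposition properties of derivatives}~(3) and the construction of the Whittaker space as the image of $V$ in $\Ind_{N_n}^{G_n}\psi_A$ under the essentially unique $A[G_n]$-equivariant map corresponding to the isomorphism $V^{(n)}\simeq A$. Once this is in place, the corollary is a direct transport of Corollary~\ref{corollary functional equation} through the scalar-extension functor.
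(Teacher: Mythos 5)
Your proposal is correct and follows essentially the same route as the paper: apply $f\otimes f'$ to the functional equation, note it passes inside the coefficients $c_l$ because the integrals are finite sums, identify $\Wh(V\otimes_A B,\psi_B)$ as (generated by) the push-forward of $\Wh(V,\psi_A)$, and conclude by the uniqueness in Corollary~\ref{corollary functional equation}. The extra details you supply (base change of the central character, bilinear extension from the generators $W^f$) are exactly the points the paper leaves implicit.
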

\begin{proof}
Apply $f\otimes f'$ to Equation~(\ref{functionaleqn}) means applying $f\otimes f'$ to the coefficients $c_l(W,W';j)$, and $c_l(\rho(w_{m,n-m})\widetilde{W}, \widetilde{W}';n-m-1-j)$. Then $f\otimes f'$ can be moved inside the integrals because they are finite sums. The Whittaker space $\Wh(V\otimes_AB,\psi_B) =\Wh(\Wh(V,\psi_A)\otimes_AB,\psi_B)$ is the image of $\Wh(V,\psi_A)$ under the pushing-forward map $\Ind_U^G\psi_A\to \Ind_U^G\psi_B$, so the result follows from uniqueness in Corollary~\ref{corollary functional equation}.
\end{proof}
Thus $\gamma(X,V,V',\psi)$ generalizes the gamma factors in \cite{Tate,MR0342495,JPS1,JPS2,Moss16.2,KM17,Moss16.1}.

\section{Kirillov models}

\subsection{Kirillov models via the functional equations}

Now we can show that the Whittaker {space} of an $A[G_n]$-module of Whittaker type admits a Kirillov model, for $A$ a commutative Noetherian $W(k)$-algebra. To this end we use the following ``completeness of Whittaker models'' statement, which follows from \cite{HMconverse}, Cor 3.6.

\begin{thm}\label{completeness}
Take $W\in \ind_{N_n}^{G_n}(\psi)$. Suppose that for any 
finite-type commutative $W(k)$-algebra $A'$, and any Whittaker type $A'[G_n]$-module $V'$, one has 
$\int_{N_n\backslash G_n} W(g)\otimes W'(g)dg=0$ for all $W'\in \Wh(V',\psi_{A'}^{-1})$, then $W=0$.
\end{thm}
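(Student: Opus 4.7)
Since this statement is quoted from \cite{HMconverse}, Cor 3.6, the proof here will likely be a reference. Below I sketch how one might establish it directly. The plan is to show that the bilinear pairing
\[ (W, W') \mapsto \int_{N_n \backslash G_n} W(g) \otimes W'(g)\, dg \]
is non-degenerate in the $W$ variable, as $V'$ varies over Whittaker type $A'[G_n]$-modules and $A'$ over finite-type $W(k)$-algebras. Observe first that the integral is well-defined as a finite sum, because $W \in \ind_{N_n}^{G_n}(\psi)$ has compact support modulo $N_n$, so the only thing to worry about is exhibiting enough $W'$ to detect any nonzero $W$.

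Fix a nonzero $W$ and a compact open subgroup $K$ fixing it, and write $W = \sum_i c_i W_{g_i, K}$ where $W_{g_i, K}$ is the unique function in $\ind_{N_n}^{G_n}(\psi)$ supported on $N_n g_i K$ with $W_{g_i, K}(g_i) = 1$, the $g_i$ running through a finite set of double coset representatives, with some $c_i \neq 0$. For any $K$-fixed $W'$, the pairing reduces to $\vol(K) \sum_i c_i W'(g_i)$. Thus it suffices to exhibit some Whittaker type $V'$ and $W' \in \Wh(V', \psi_{A'}^{-1})$ such that $\sum_i c_i W'(g_i) \neq 0$.

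The natural candidate is a family of unramified principal series. Take $A' = W(k)[Y_1^{\pm 1}, \ldots, Y_n^{\pm 1}]$, suitably localized so that $V' := \Ind_{B_n}^{G_n}(\nu_1 \otimes \cdots \otimes \nu_n)$ is of Whittaker type, where $\nu_i$ is the unramified character of $F^\times$ sending a uniformizer to $Y_i$. Construct Whittaker functions $W'_\phi$ for $\phi \in V'$ via Jacquet integrals; the values $W'_\phi(g_i)$ are then explicit Laurent polynomial (or rational) expressions in the $Y_i$, essentially Mellin-type formulas of Casselman--Shalika flavor.

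The main obstacle is the interpolation step: showing that if $\sum_i c_i W'_\phi(g_i) = 0$ for all $\phi$ and all choices of $Y_j$, then each $c_i = 0$. In the complex case this is standard via Mellin inversion on the maximal torus, but over the integral ring $A'$ one has to argue more algebraically, reducing the problem to the linear independence of distinct Laurent monomials in the $Y_i$. This will require a careful choice of test vectors $\phi$ (for example $K$-fixed ones giving explicit Casselman--Shalika formulas), together with a packaging of the Iwasawa decomposition that separates contributions of different double cosets; the delicate point is that the $Y_i$ must be genuinely free variables, which is why the principal series family is taken over the Laurent polynomial ring rather than being fixed.
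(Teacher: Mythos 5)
Your opening observation is the correct one: the paper does not prove this statement at all, but imports it from Helm--Moss (\emph{Converse theorems and the local Langlands correspondence in families}, Cor.\ 3.6), whose proof runs through Helm's theory of the integral Bernstein center and the universal co-Whittaker modules attached to each primitive idempotent (block) of that center; the remark following the theorem in the paper even records that the cited result is stronger, needing only co-Whittaker $V'$. Had you stopped at the citation, there would be nothing to object to. The direct argument you then sketch, however, has a genuine gap beyond the interpolation step you yourself flag as unresolved. The problem is the choice of detecting family: an unramified principal series family $\Ind_{B_n}^{G_n}(\nu_1\otimes\cdots\otimes\nu_n)$ over $W(k)[Y_1^{\pm1},\dots,Y_n^{\pm1}]$ (however you localize and however you vary the $Y_i$) only produces Whittaker functions lying in the Iwahori-spherical Bernstein component. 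Since the pairing $B(W,W')=\int_{N_n\backslash G_n}W(g)\otimes W'(g)\,dg$ is $G_n$-invariant, it induces a $G_n$-map from $\Wh(V',\psi_{A'}^{-1})$ to the smooth dual of the submodule of $\ind_{N_n}^{G_n}(\psi)$ generated by $W$; if $W$ lies in (say) a cuspidal block component of $\ind_{N_n}^{G_n}(\psi)$, any such map to the dual of a representation in a different block is zero, so $B(W,\,\cdot\,)$ vanishes identically on your family without forcing $W=0$. Completeness therefore requires test families exhausting \emph{all} Bernstein components, which is exactly what the universal co-Whittaker modules of Helm--Moss supply and what your single family cannot.

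Two further points would also need repair even within the unramified block: the reduction to $\sum_i c_i W'(g_i)$ requires values of $W'$ at arbitrary coset representatives $g_i$ and arbitrary small level $K$, whereas Casselman--Shalika-type formulas give only torus values of the spherical vector for a hyperspecial maximal compact, so the claimed ``explicit Laurent polynomial'' structure is not available as stated; and over an $\ell$-torsion coefficient ring the convergence/rationality of the Jacquet integrals defining $W'_\phi$ is itself a nontrivial issue that must be handled in the family before any linear-independence argument can begin. As written, the proposal does not constitute a proof; the honest route is the one the paper takes, namely quoting the Helm--Moss completeness result.
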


\begin{rem*}
The theorem stated above is slightly weaker than what is proved in \cite{HMconverse}, which only requires $V'$ to run over so-called ``co-Whittaker'' $A'[G_n]$-modules, which are in particular Whittaker type. See also \cite[Theorem 5.1]{Moss20}. 
\end{rem*}

Denoting by $Z_n$ the center of $G_n$. From the above results and the $(n,n-1)$ functional equation, we deduce the existence of modular Kirillov models. As mentioned before this proof is a simplification of that in \cite[Proposition 7.5.1]{JPS1} for $G_3$ (which used the $(n,n-2)$-functional equation).

\begin{thm}\label{theorem modular kirillov}
Let $A$ be a Noetherian $W(k)$-algebra and let $V$ be an $A[G_n]$-module of Whittaker type ($n\geq 2$). Then the map $\Res_{P_n}:W\mapsto W_{|P_n}$ is injective on $\Wh(V,\psi_A)$.
\end{thm}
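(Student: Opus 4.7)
The plan is to combine Chen's double coset decomposition, the functional equations of Corollary~\ref{corollary functional equation}, and the completeness of Whittaker models (Theorem~\ref{completeness}), following the strategy of \cite[Prop.~7.5.1]{JPS1}. Suppose $W\in\Wh(V,\psi_A)$ satisfies $W|_{P_n}=0$; we want to deduce $W=0$. Since by Proposition~\ref{prop:whittakertypeschur} the center $Z_n$ acts on $\Wh(V,\psi_A)$ through a central character $\omega_V$, the vanishing of $W$ on $P_n$ extends to a vanishing on $Z_nP_n$. Combining this with Lemma~\ref{lemma Chen} and the left $N_n$-Whittaker equivariance of $W$, the problem reduces to showing $W(w_nw_{m,n-m}\,p)=0$ for every $p\in P_n$ and every $m\in\{1,\dots,n-1\}$.

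Fix such an $m$. I will play the functional equation of Corollary~\ref{corollary functional equation} against Whittaker-type $A'[G_m]$-modules $V'$ (with $A'$ allowed to vary among finite-type commutative $W(k)$-algebras, so that eventually $V'$ and $W'\in\Wh(V',\psi_{A'}^{-1})$ range over all such pairs). For $j\in\{0,\dots,n-m-1\}$, the matrix
\[
\begin{pmatrix}g&&\\x&I_j&\\&&I_{n-m-j}\end{pmatrix}
\]
lies in $P_n$ because its last row is $(0,\dots,0,1)$ (here $n-m-j\geq 1$), so the integrand of $I(X,W,W';j)$ vanishes identically, whence $I(X,W,W';j)=0$. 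The functional equation then forces
\[
I\!\left(q^{-1}X^{-1},\,\rho(w_{m,n-m})\widetilde W,\,\widetilde W';\,n-m-1-j\right)=0 \quad\text{in }R.
\]
As $j$ runs through $\{0,\dots,n-m-1\}$, so does $j':=n-m-1-j$. Reading off the coefficient of each $X^l$ and invoking Theorem~\ref{completeness} applied to the function of $g\in G_m$ obtained by integrating out $x$ (this function lies in $\ind_{N_m}^{G_m}(\psi)$ by the support considerations of Lemma~\ref{lemma support}), one concludes that for every $j'\in\{0,\dots,n-m-1\}$ and every $g\in G_m$,
\begin{equation}\label{myvanishing}
\int_{\M_{j',m}}\bigl(\rho(w_{m,n-m})\widetilde W\bigr)\!\begin{pmatrix}g&&\\x&I_{j'}&\\&&I_{n-m-j'}\end{pmatrix}dx=0.
\end{equation}

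The final step is to upgrade the integral identities \eqref{myvanishing} to pointwise vanishing on the whole Chen coset $w_nw_{m,n-m}P_n$. This is the main obstacle, and is handled by ascending induction on $j'$. For $j'=0$, the "integral" is a pure evaluation, yielding $(\rho(w_{m,n-m})\widetilde W)(\diag(g,I_{n-m}))=0$ for all $g\in G_m$; unwinding $\widetilde W(h)=W(w_n\,{}^th^{-1})$ translates this into the vanishing of $W$ on $w_nw_{m,n-m}\cdot\{\diag(h,I_{n-m}):h\in G_m\}$. For $j'\geq 1$, I exploit the freedom to replace $g$ in \eqref{myvanishing} by $g\cdot y$ for $y$ ranging over a suitable unipotent subgroup of $G_m$, combined with smoothness of $W$ and the pointwise vanishing already established at smaller $j'$, in order to peel off the $dx$-integration and extract pointwise vanishing of the integrand itself; back on the $W$-side this produces vanishing of $W$ on increasingly large portions of $w_nw_{m,n-m}P_n$, exhausting it as $j'$ reaches $n-m-1$.

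The hardest step is clearly this last matrix-level bookkeeping passage from \eqref{myvanishing} to pointwise vanishing: the functional equation and Theorem~\ref{completeness} are "black boxes" once the setup is right, but keeping track of how the various $j'$-slices assemble into the full Chen coset is where the generalization of \cite[Prop.~7.5.1]{JPS1} from $\GL_3$ (where only one nontrivial $(m,j')$ occurs) to arbitrary $n$ actually requires work.
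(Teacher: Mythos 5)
Your overall strategy (Chen's decomposition, the functional equation, completeness of Whittaker models, following JPSS 7.5.1) is the same as the paper's, and your steps are fine up to the point where completeness yields, for each $j'$, the vanishing of the $x$-integrals $\int_{\M_{j',m}}(\rho(w_{m,n-m})\widetilde W)\bigl(\begin{smallmatrix}g&&\\x&I_{j'}&\\&&I_{n-m-j'}\end{smallmatrix}\bigr)dx$ for all $g\in G_m$. The gap is the last step, which you acknowledge is the hard one but only sketch: the claimed ascending induction on $j'$ that "peels off" the $dx$-integration and exhausts the coset $w_nw_{m,n-m}P_n$. As described it cannot work, for a concrete reason: the matrices occurring in these identities, for \emph{all} $j'\leq n-m-1$ together, only involve the block $\diag(G_m,I_{n-m})$ and one lower unipotent block $x$; transporting back via $h\mapsto w_nw_{m,n-m}\,{}^{t}h^{-1}$ and using left $N_n$-equivariance, the set of points of $w_nw_{m,n-m}P_n$ that these identities can possibly control has dimension roughly $m(n-1)+\dim N_n$, which for $m<n-1$ (already for small $m$) is strictly smaller than $\dim P_n=n(n-1)$; in particular the directions $\diag(h,1)$, $h\in G_{n-1}$, and the last-column unipotent of $P_n$ never appear, so no Fubini/smoothness manipulation internal to these integrals, nor replacing $g$ by $gy$ with $y$ unipotent in $G_m$, can produce them. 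So pointwise vanishing on all of $w_nw_{m,n-m}P_n$ is not reachable from this data alone.

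The missing idea, which is also what the paper uses and which makes the whole $j'>0$ analysis unnecessary, is that the hypothesis is stable under right translation by $P_n$: if $W|_{P_n}=0$ then $(\rho(p)W)|_{P_n}=0$ for every $p\in P_n$ (and $Z_n$ is handled by the central character from Proposition \ref{prop:whittakertypeschur}). Hence, to kill the Chen coset $N_nw_nw_{m,n-m}Z_nP_n$ it suffices to prove vanishing at the points $w_nw_{m,n-m}\diag(g,I_{n-m})$, $g\in G_m$, for every $W$ satisfying the hypothesis, and then apply this to each $\rho(p)W$. That pointwise statement is exactly what the single case $j=n-m-1$ (dual index $0$, where no $x$-integration occurs on the dual side) gives directly from Theorem \ref{completeness} and the support Lemma \ref{lemma support}; this is how the paper argues, concluding via $P_n=N_nG_{n-1}$, the central character, and Lemma \ref{lemma Chen}. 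So you should discard the induction on $j'$ (and the functional equations for $j<n-m-1$, which you do not need) and instead make the right-$P_n$-stability of the hypothesis explicit; with that observation your argument closes, and without it the proposal is incomplete.
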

\begin{proof}
We will only make use of the $(n,n-1)$ functional equation. Suppose that $W_{|P_n}=0$ for $W\in \Wh(V,\psi_A)$. Then for any finite type commutative $W(k)$-algebra $A'$, any $A'[G_{n-1}]$-module of Whittaker type $V'$ and any $W'\in \Wh(V',\psi_{A'}^{-1})$, one has $I(X, W, W';0)=0$. From Corollary \ref{corollary functional equation} we deduce that $I(X, \widetilde{W}, \widetilde{W'};0)=0$. So $I(X, \widetilde{W}, \ .\ ;0)$ vanishes on $\Wh(V',\psi_{A'}^{-1})$ for all 
$A'[G_{n-1}]$-module of Whittaker type $V'$, when $A'$ varies through all commutative finitely generated 
$W(k)$-algebras. This implies that for fixed $l\in \Z$, the same is true for $c_l(X,\widetilde{W}, \ .\ ;0)$. Now by Theorem \ref{completeness} and Lemma \ref{lemma support} we deduce that $\widetilde{W}$ vanishes on $G_{n-1}^{(l)}$ for all $l$, hence on $G_{n-1}$, hence on $P_n=N_nG_{n-1}$. This argument of course works in the other direction, hence $W$ vanishes on $P_n$ if and only if $\widetilde{W}$ vanishes on $P_n$, i.e. if and only if $W$ vanishes on $w_n { }^t\! P_n$. In particular the space 
\[E=\{W\in \Wh(V,\psi_A), \Res_{P_n}(W)=0\}\] is clearly a sub-$A[P_n]$-module of $\Wh(V,\psi)$, but also a sub-$A[{ }^t\! P_n]$-module. Because 
$P_n$ and ${ }^t\! P_n$ generate $G_n$, we deduce that $E$ is a sub-$A[G_n]$-module of $\Wh(V,\psi_A)$. Now because the linear form 
$W\mapsto W(I_n)$ is identically zero on $E\subseteq \Ind_{U_n}^{G_n}(\psi_A)$, this implies that $E=\{0\}$. 
\end{proof}

\subsection{Integral Kirillov models}

We now answer a question of Vign\'{e}ras. We set $G:=G_n$ for $n\geq 2$, and similarly we set $P=P_n$. Let $k=\Fl$, let $\psi = \psi_{\Ql}$, and let $V$ be a $\Ql[G]$-module of Whittaker type. By Theorem~\ref{theorem modular kirillov}, $\Res_{P}$ is injective on $\Wh(V,\psi)$. We set $\mathcal{K}(V,\psi)=\Res_{P}(\Wh(V,\psi))$. 
Vignéras has shown in \cite[Prop II.4]{V2} that, if the $\Zl[G]$-module $\Wh(V,\psi)_e$ of 
Whittaker functions with values in $\Zl$ is nonzero, it forms an integral structure of $\Wh(V,\psi)$, that is, a $G$-stable free $\Zl$-module that contains a $\Ql$-basis of $\Wh(V,\psi)$. If $\mathfrak{M}$ denotes the maximal ideal of $\Zl$, we denote by $r_\ell$ the reduction modulo $\mathfrak{M}$ map on functions in $\Wh(V,\psi)_e$. The representation 
$r_\ell(\Wh(V,\psi)_e)$ is then a $k[G]$-module of Whittaker type, which is isomorphic to its Whittaker model, as it is contained in $\Ind_U^G\psi_k$. Now one can define $\mathcal{K}(V,\psi)_e$ in a similar manner, and ask if $\mathcal{K}(V,\psi)_e$ is an integral structure for $\mathcal{K}(V,\psi)$. This was asked  by Vignéras in \cite{VigGL2}. Theorem \ref{theorem modular kirillov} answers this question in the affirmative. 

\begin{corollary}\label{integralstructure}
The map $\Res_{P}$ from $\Wh(V,\psi)_e$ to $\mathcal{K}(V,\psi)_e$ is bijective, hence $\mathcal{K}(V,\psi)_e$ is an integral structure on $\mathcal{K}(V,\psi)$.
\end{corollary}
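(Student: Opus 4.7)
Injectivity of $\Res_P\colon \Wh(V,\psi)_e\to \mathcal{K}(V,\psi)_e$ follows at once from Theorem~\ref{theorem modular kirillov}, since restriction is already injective on the bigger space $\Wh(V,\psi)$. The substantive content is surjectivity: given $f\in \mathcal{K}(V,\psi)_e$ with its unique Whittaker extension $W\in \Wh(V,\psi)$, I must show $W$ takes $\Zl$-values. The plan is to invoke Theorem~\ref{theorem modular kirillov} a second time, after reducing modulo $\ell$.

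To put myself in a Noetherian setting, I descend to a finite extension $K/\Q_\ell$ containing the values of $\psi$ such that $V=V_K\otimes_K\Ql$ for a Whittaker-type $K[G_n]$-module $V_K$, with $W$ and $f$ both $K$-valued. Write $\OO_K$ for the ring of integers (a DVR), $\varpi_K$ for a uniformizer, and $k_K=\OO_K/\varpi_K$, and set $M=\Wh(V_K,\psi_K)_{\OO_K}$. Vignéras's integral-structure argument (\cite[Prop II.4]{V2}, adapted to $G_n$ via Kirillov models) ensures $M$ is a Whittaker-type $\OO_K[G_n]$-module. By Proposition~\ref{proposition properties of derivatives}(3), the reduction $\bar M:=M/\varpi_K M$ is then Whittaker type over $k_K$; moreover the map $\bar M\to \Ind_{N_n}^{G_n}\psi_{k_K}$ is injective (if $W_0\in M$ reduces to zero, its values lie in $\varpi_K\OO_K$, so $\varpi_K^{-1}W_0$ is again $\OO_K$-valued and hence in $M$, giving $W_0\in \varpi_K M$). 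Thus $\bar M$ equals its own Whittaker space, and Theorem~\ref{theorem modular kirillov} applied to $\bar M$ gives that $\Res_P$ is injective on $\bar M$.

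Now let $k\geq 0$ be minimal with $\varpi_K^k W\in M$, which exists because $M\otimes_{\OO_K}K=\Wh(V_K,\psi_K)$. Suppose, for contradiction, that $k\geq 1$. Then the reduction $\overline{\varpi_K^kW}\in \bar M$ restricts on $P_n$ to $\overline{\varpi_K^k f}=0$, since $f$ is $\OO_K$-valued. Injectivity of $\Res_P$ on $\bar M$ then forces $\varpi_K^k W\in \varpi_K M$, whence $\varpi_K^{k-1}W\in M$, contradicting minimality of $k$. Therefore $k=0$ and $W\in M\subset \Wh(V,\psi)_e$, proving surjectivity. The integral-structure property of $\mathcal{K}(V,\psi)_e$ then follows because the $G$-equivariant isomorphism $\Res_P$ transports the integral structure $\Wh(V,\psi)_e$ onto $\mathcal{K}(V,\psi)_e$.

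The main technical obstacle is the descent to $\OO_K$ and the verification that $\bar M$ is of Whittaker type and embeds naturally into $\Ind_{N_n}^{G_n}\psi_{k_K}$; once arranged, the argument closes in a single line via the mod-$\varpi_K$ injectivity coming from Theorem~\ref{theorem modular kirillov}.
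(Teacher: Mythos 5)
Your overall strategy is the paper's own: injectivity is immediate from Theorem~\ref{theorem modular kirillov}, and surjectivity is obtained by taking a minimal power of a uniformizer making $W$ integral, reducing modulo the maximal ideal, and applying Theorem~\ref{theorem modular kirillov} a second time to the mod-$\ell$ reduction to get a contradiction with minimality. The paper runs exactly this argument, but without descending the module: it works with the $\Zl$-lattice $\Wh(V,\psi)_e$ (an integral structure by \cite[Prop II.4]{V2}) and its reduction $r_\ell(\Wh(V,\psi)_e)$, which is a Whittaker type $k[G_n]$-module with $k=\Fl$ equal to its own Whittaker space, and it uses a finite extension $E/\Q_\ell$ only to provide a uniformizer controlling the values of the single function $W$.

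The weak point in your version is precisely the descent. Theorem~\ref{theorem modular kirillov} --- and indeed the paper's very definition of ``Whittaker type'' --- is only available over Noetherian $W(k)$-algebras with $k$ algebraically closed, and none of $K$, $\OO_K$, $k_K$ is a $W(\Fl)$-algebra: a ring homomorphism $W(\Fl)\to k_K$ would factor through an embedding of $\Fl$ into a finite field, and a homomorphism $W(\Fl)\to K$ (or $\OO_K$) would be injective and hence force all roots of unity of order prime to $\ell$ into $K$, which a finite extension of $\Q_\ell$ does not contain. So your second, decisive application of Theorem~\ref{theorem modular kirillov}, over the residue field $k_K$, is outside its stated hypotheses as written. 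In addition, the claim that $M=\Wh(V_K,\psi_K)_{\OO_K}$ is of Whittaker type over $\OO_K$ (in particular finitely generated over $\OO_K[G_n]$ with top derivative free of rank one) is asserted rather than proved; \cite[Prop II.4]{V2} gives that it is an integral structure, which is weaker than what you feed into Proposition~\ref{proposition properties of derivatives}. Both points are repairable: either base change $\bar M$ along $k_K\hookrightarrow \Fl$ (derivatives commute with this extension and $\bar M\otimes_{k_K}\Fl$ still embeds into $\Ind_{N_n}^{G_n}\psi_{\Fl}$, so injectivity over $\Fl$ gives it over $k_K$), or, more simply, drop the descent of the module altogether and run your minimal-power argument inside $\Wh(V,\psi)_e$ and its reduction $r_\ell(\Wh(V,\psi)_e)$ as the paper does, using the finite extension only for the values of $W$ itself.
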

\begin{proof}
Let $\Q_\ell^{\mathrm{ur}}$ be the maximal unramified extension of $\Q_\ell$. By standard results of Vignéras (see \cite[II.4.7]{Vbook}), there is some finite extension $E$ of $\Q_\ell^{\mathrm{ur}}$ and a Whittaker type $E[G_n]$-module $V_0$ such that $V = V_0\otimes_E \Ql$. Let $\mathcal{O}_E$ denote the ring of integers of $E$. It suffices to prove that $\Res_P:\Wh(V_0,\psi_E)_e\to \mathcal{K}(V_0,\psi_E)_e$ is bijective.

Clearly $\Res_{P}$ sends $\Wh(V_0,\psi_E)_e$ to a free $\mathcal{O}_E$-submodule inside $\mathcal{K}(V_0,\psi_E)_e$ in an injective manner. Take 
$K=\Res_{P}(W)\in \mathcal{K}(V_0,\psi_E)_e$. Suppose not all values of $W$ are in $\mathcal{O}_E$. Denoting by $\w_E$ a uniformizer of $E$, this means there exists a minimal $n>0$ such that $\w_E^n W$ takes values in $O_E$, in particular $\w_E^n W$ takes at least one value in $O_E^\times$.  However 
$\Res_{P}(r_\ell(\w_E^n W))=r_\ell(\w_E^n K)=0$ because $K\in \mathcal{K}(V_0,\psi_E)_e$, hence $r_\ell(\w_E^n W)=0$ according to Theorem \ref{theorem modular kirillov}, contradicting that $\w_E^n W$ reaches an element of $O_E^\times$. Hence $W\in \Wh(V_0,\psi_E)_e$ and the statement follows.
\end{proof}

\vspace{-0,5cm}
\bibliographystyle{plain}

\begin{thebibliography}{10}

\bibitem{BZ76}
I.N. Bernstein and A.V. Zelevinsky.
\newblock Representations of the group {$GL(n,F),$} where {$F$} is a local
  non-{A}rchimedean field.
\newblock {\em Uspehi Mat. Nauk}, 31(3(189)):5--70, 1976.

\bibitem{BZ77}
I.N. Bernstein and A.V. Zelevinsky.
\newblock Induced representations of reductive {${ p}$}-adic groups. {I}.
\newblock {\em Ann. Sci. \'Ecole Norm. Sup. (4)}, 10(4):441--472, 1977.

\bibitem{Ber}
Joseph~N. Bernstein.
\newblock {$P$}-invariant distributions on {${\rm GL}(N)$} and the
  classification of unitary representations of {${\rm GL}(N)$}
  (non-{A}rchimedean case).
\newblock In {\em Lie group representations, {II} ({C}ollege {P}ark, {M}d.,
  1982/1983)}, volume 1041 of {\em Lecture Notes in Math.}, pages 50--102.
  Springer, Berlin, 1984.

\bibitem{CasselmanConductor}
William Casselman.
\newblock On some results of {A}tkin and {L}ehner.
\newblock {\em Math. Ann.}, 201:301--314, 1973.


\bibitem{EH14}
Matthew Emerton and David Helm.
\newblock The local {L}anglands correspondence for {${\rm GL}_n$} in families.
\newblock {\em Ann. Sci. \'{E}c. Norm. Sup\'{e}r. (4)}, 47(4):655--722, 2014.

\bibitem{GK}
I.M. Gel{\cprime}fand and D.A. Kajdan.
\newblock Representations of the group {${\rm GL}(n,K)$} where {$K$} is a local
  field.
\newblock In {\em Lie groups and their representations ({P}roc.{S}ummer
  {S}chool, {B}olyai {J}\'anos {M}ath. {S}oc., {B}udapest, 1971)}, pages
  95--118. Halsted, New York, 1975.

\bibitem{MR0342495}
Roger Godement and Herv{\'e} Jacquet.
\newblock {\em Zeta functions of simple algebras}.
\newblock Lecture Notes in Mathematics, Vol. 260. Springer-Verlag, Berlin-New
  York, 1972.

\bibitem{h_whitt}
David Helm.
\newblock Whittaker models and the integral {Bernstein} center for {$GL(n)$}.
\newblock {\em Duke Math. J.}, 165(9):1597--1628, 2016.

\bibitem{HMconverse}
David Helm and Gilbert Moss.
\newblock Converse theorems and the local {L}anglands correspondence in
  families.
\newblock {\em Invent. Math.}, 214:999–1022, 2018.

\bibitem{conductor_corrected}
H.~Jacquet.
\newblock A correction to {\emph{conducteur des repr\'{e}sentations du groupe
  lin\'{e}aire}}.
\newblock {\em Pac. J. Math.}, 260(2):514--525, 2012.

\bibitem{JL}
H.~Jacquet and R.~P. Langlands.
\newblock {\em Automorphic forms on {${\rm GL}(2)$}}.
\newblock Lecture Notes in Mathematics, Vol. 114. Springer-Verlag, Berlin-New
  York, 1970.

\bibitem{JSconductor}
H.~Jacquet, I.~I. Piatetski-Shapiro, and J.~Shalika.
\newblock Conducteur des repr\'{e}sentations du groupe lin\'{e}aire.
\newblock {\em Math. Ann.}, 256(2):199--214, 1981.

\bibitem{JPS2}
H.~Jacquet, I.I. Piatetski-Shapiro, and J.A. Shalika.
\newblock Rankin-{S}elberg convolutions.
\newblock {\em Amer. J. Math.}, 105(2):367--464, 1983.

\bibitem{JPS1}
Herv{\'e} Jacquet, Ilja~Iosifovitch Piatetski-Shapiro, and Joseph Shalika.
\newblock Automorphic forms on {${\rm GL}(3)$}. {I}.
\newblock {\em Ann. of Math. (2)}, 109(1):169--212, 1979.

\bibitem{JSpacific}
Herv\'{e} Jacquet and Joseph Shalika.
\newblock The {W}hittaker models of induced representations.
\newblock {\em Pacific J. Math.}, 109(1):107--120, 1983.

\bibitem{kirillov}
A.A. Kirillov.
\newblock Infinite-dimensional unitary representations of a second-order matrix
  group with elements in a locally compact field.
\newblock {\em Dokl. Akad. Nauk SSSR}, 150:740--743, 1963.

\bibitem{KMextension}
R.~Kurinczuk and N.~Matringe.
\newblock Extension of {W}hittaker functions and test vectors.
\newblock {\em Res. Number Theory}, 4(3):Paper No. 31, 18, 2018.

\bibitem{KM17}
Robert Kurinczuk and Nadir Matringe.
\newblock Rankin-{S}elberg local factors modulo {$\ell$}.
\newblock {\em Selecta Math. (N.S.)}, 23(1):767--811, 2017.

\bibitem{matringe_essential}
Nadir Matringe.
\newblock Essential whittaker functions for {$GL(n)$}.
\newblock {\em Documenta Math.}, 18:1191--1214, 2013.

\bibitem{Moss16.1}
Gilbert Moss.
\newblock Gamma factors of pairs and a local converse theorem in families.
\newblock {\em Int. Math. Res. Not. IMRN}, (16):4903--4936, 2016.

\bibitem{Moss16.2}
Gilbert Moss.
\newblock Interpolating local constants in families.
\newblock {\em Math. Res. Lett.}, 23(6):1789--1817, 2016.

\bibitem{Moss20}
Gilbert Moss.
\newblock Characterizing the mod-{$\ell$} local {L}anglands correspondence by
  nilpotent gamma factors.
\newblock {\em Nagoya Math. J.}, pages 1--17, 2020.
\newblock doi:10.1017/nmj.2020.8.

\bibitem{Tate}
J.~T. Tate.
\newblock Fourier analysis in number fields, and {H}ecke's zeta-functions.
\newblock In {\em Algebraic {N}umber {T}heory ({P}roc. {I}nstructional {C}onf.,
  {B}righton, 1965)}, pages 305--347. Thompson, Washington, D.C., 1967.

\bibitem{VigGL2}
Marie-France Vign{\'e}ras.
\newblock Repr\'esentations modulaires de {${\rm GL}(2,F)$} en
  caract\'eristique {$l,\;F$} corps {$p$}-adique, {$p\neq l$}.
\newblock {\em Compositio Math.}, 72(1):33--66, 1989.

\bibitem{Vbook}
Marie-France Vign{\'e}ras.
\newblock {\em Repr\'esentations {$\ell$}-modulaires d'un groupe r\'eductif
  {$p$}-adique avec {$\ell\ne p$}}, volume 137 of {\em Progress in
  Mathematics}.
\newblock Birkh\"auser Boston Inc., Boston, MA, 1996.

\bibitem{V2}
Marie-France Vign{\'e}ras.
\newblock On highest {W}hittaker models and integral structures.
\newblock In {\em Contributions to automorphic forms, geometry, and number
  theory}, pages 773--801. Johns Hopkins Univ. Press, Baltimore, MD, 2004.

\bibitem{vig_kirillov}
M.F. Vign{\'{e}}ras.
\newblock Modele de {K}irillov entier.
\newblock {\em Comptes Rendus de l'Acad{\'{e}}mie des Sciences - Series I -
  Mathematics}, 326:411--416, 1998.

\end{thebibliography}

\def\cprime{$'$}

Data sharing not applicable to this article as no datasets were generated or analysed.

\end{document}